\newtheorem{theorem}{Theorem}[section]
\newtheorem{lemma}[theorem]{Lemma}
\newtheorem{proposition}[theorem]{Proposition}
\newtheorem{corollary}[theorem]{Corollary}
\theoremstyle{definition}
\newtheorem{definition}[theorem]{Definition}
\newtheorem{construction}[theorem]{Construction}
\newtheorem{example}[theorem]{Example}
\newtheorem{remark}[theorem]{Remark}
\numberwithin{equation}{theorem}
\newcommand{\git}{\mathbin{
  \mathchoice{/\mkern-6mu/}% \displaystyle
    {/\mkern-6mu/}% \textstyle
    {/\mkern-5mu/}% \scriptstyle
    {/\mkern-5mu/}}}% \scriptscriptstyle
\def\vector2#1#2{\left(\begin{array}{c} #1 \\ #2 \end{array}\right)}
\def\Cl{{\rm Cl}}
\def\CC{{\mathbb C}}
\def\TT{{\mathbb T}}
\def\ZZ{{\mathbb Z}}
\def\QQ{{\mathbb Q}}
\def\OOO{{\mathcal O}}
\def\RRR{{\mathcal R}}
\def\Mat{{\rm Mat}}
\def\div{{\rm div}}
\def\quot{/\!\!/}
\def\im{{\rm im}}
\def\bangle#1{{\langle #1 \rangle}}
\def\Spec{{\rm Spec}}
\def\im{{\rm im}}
\def\V{{\rm V}}
\title[Divisor class groups of rational trinomial varieties]%
{Divisor class groups of rational trinomial varieties}
\author[Milena~Wrobel]{Milena Wrobel} 
\address{Max-Planck-Institut f\"ur Mathematik in den Naturwissenschaften, Inselstra\ss e~22, 04103 Leipzig, Germany}
\email{wrobel@mis.mpg.de}
\subjclass[2010]{13C20, 14R20, 13A05}
\keywords{divisor class groups, trinomial varieties, torus actions, Cox rings, Du Val surfaces}
\begin{document}

\begin{abstract}
We give an explicit description of
the divisor class groups of rational 
trinomial varieties.
As an application, we relate the
iteration of Cox rings of any 
rational variety 
with torus action of complexity 
one to that of a Du Val surface. 
\end{abstract}

\maketitle

\section{Introduction}
This article deals with the explicit computation 
of divisor class groups of affine varieties~$X$. 
More precisely, we contribute to the case, when 
$X$ is a quasihomogeneous complete intersection.
Important results in this setting are due to
Flenner~\cite{Fl} showing that isolated 
three-dimensional singularities have free 
abelian divisor class group and Lang, Singh, 
Spiroff, Scheja, Storch~\cite{La,SchSt,SiSP} 
providing explicit descriptions of the divisor 
class groups of affine hypersurfaces 
$X = V(z^n-g)$, where~$g$ is a weighted 
homogeneous polynomial not depending 
on~$z$ and of degree coprime to~$n$. 

We work over 
the field of complex numbers $\CC$.
Our subjects are 
\emph{trinomial varieties}, that means
affine
varieties~$X$ given as the common zero 
locus of trinomials of the form
$$ 
T_0^{l_0} + T_1^{l_1} +T_2^{l_2},
\quad
\theta_1 T_1^{l_1} + T_2^{l_2} +T_3^{l_3},
\
\ldots, \
\theta_{r-2} T_{r-2}^{l_{r-2}} + T_{r-1}^{l_{r-1}} +T_r^{l_r},
$$
with monomials 
$T_i^{l_i} = T_{i1}^{l_{i1}} \cdots T_{in_i}^{l_{in_i}}$ and pairwise different $\theta_i \in \CC^*$.
Any trinomial variety is in particular a
quasihomogeneous complete intersection.
Classical examples are the Pham-Brieskorn
surfaces~\cite{Br}, which also showed up
in~\cite{Mori}.
We refer to~\cite{HaHe,HaHeSue} for the
basic algebraic theory of
trinomial surfaces and to~\cite{Ar2,Ar1}
for recent work on geometric aspects.

Let us turn to the divisor class group
of a trinomial variety~$X$.
For each exponent vector $l_i$ set 
$\mathfrak{l}_i := \gcd(l_{i1}, \ldots, l_{in_i})$. 
Suitably renumbering the variables 
and rearranging the trinomials,
we can always achieve  
$$
\gcd(\mathfrak{l}_0,\mathfrak{l}_1)
\
\ge
\
\gcd(\mathfrak{l}_{i}, \mathfrak{l}_{j})
\text{ for all }
0 \le i < j \le r,
\qquad
\gcd(\mathfrak{l}_0,\mathfrak{l}_1)
\ 
\ge
\ 
\ldots 
\ 
\ge
\
\gcd(\mathfrak{l}_0,\mathfrak{l}_r)
$$
without changing the isomorphy type 
of $X$.
Moreover, we may assume that 
$n_il_{ij} >1$ holds for all
$0 \leq i \leq n$
by eliminating the
variables that occur in a linear term of some relation.
In this situation we call 
$X$ \emph{adjusted}, set
$\mathfrak{l}
:= \gcd(\mathfrak{l}_0, \mathfrak{l}_1,\mathfrak{l}_2)$
and define
$$
c(0)\ 
:=\ 
\gcd(\mathfrak{l}_1,\mathfrak{l}_2), 
\quad
c(1)\ 
:=\ 
\gcd(\mathfrak{l}_0,\mathfrak{l}_2),
\quad
c(2)\ 
:=\ 
\gcd(\mathfrak{l}_0,\mathfrak{l}_1),
$$
$$
c(i)
\ 
:=
\ 
\frac{1}{\mathfrak{l}} 
\gcd(\mathfrak{l}_1,\mathfrak{l}_2)
\gcd(\mathfrak{l}_0,\mathfrak{l}_2)
\gcd(\mathfrak{l}_0,\mathfrak{l}_1)
\quad
\text{for}
\quad
i \geq 3.
$$
Our first main result gives in particular an 
explicit description of the divisor class 
group $\Cl(X)$ of $X$ for the case that 
$\Cl(X)$ is finitely generated.

\goodbreak

\begin{theorem}
\label{thm:divclgr}
Let $X$ be a trinomial variety,
assume that $X$ is adjusted and set
$\tilde{n} := \sum_{i=0}^r ((c(i)-1)n_i-c(i) +1).$
\begin{enumerate}
\item
The divisor class group of $X$ is trivial
if and only if  
$\gcd(\mathfrak{l}_i, \mathfrak{l}_j) = 1$
holds for all $0 \le i < j \le r$.
\item
If
c:=$\gcd(\mathfrak{l}_0, \mathfrak{l}_1) > 1$ and 
$\gcd(\mathfrak{l}_i, \mathfrak{l}_j) = 1$ holds
whenever $j \notin \left\{0, 1\right\}$, then
the divisor class group of $X$ is given by
$$
\quad
\Cl(X) 
\ \cong \ 
\left(\ZZ/ \mathfrak{l}_2\ZZ\right)^{c-1}
\!\times\! \dots \! \times \!
\left(\ZZ/ \mathfrak{l}_r\ZZ\right)^{c-1}
\!\times\! 
\ZZ^{\tilde{n}}.$$ 
\item
If
$\gcd(\mathfrak{l}_0, \mathfrak{l}_1) = \gcd(\mathfrak{l}_0, \mathfrak{l}_2) = \gcd(\mathfrak{l}_1, \mathfrak{l}_2)=2$ and 
$\gcd(\mathfrak{l}_i, \mathfrak{l}_j) = 1$ holds
whenever ${j \notin \left\{0, 1,2\right\}}$,
then the divisor class group of $X$ is given by
$$
\qquad
\Cl(X) 
\ \cong \ 
\ZZ/(\mathfrak{l}_0\mathfrak{l}_1\mathfrak{l}_2/4)\ZZ
\!\times\!
\left(\ZZ/ \mathfrak{l}_3\ZZ\right)^{3}
\!\times\! \dots \! \times \!
\left(\ZZ/ \mathfrak{l}_r\ZZ\right)^{3}
\!\times\!
\ZZ^{\tilde{n}}.$$
\item
If we are not in one of cases~(i) to~(iii), then 
the divisor class group of $X$ is not finitely 
generated.
\end{enumerate}
\end{theorem}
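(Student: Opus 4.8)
The plan is to compute $\Cl(X)$ explicitly via the standard machinery for varieties with torus action of complexity one, and then read off exactly when the result is finitely generated.

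The plan is to exploit that every adjusted trinomial variety is a cone. Since the defining trinomials are quasi-homogeneous for a common positive grading of $R := \CC[T_{ij}]/(\text{trinomials})$ — one may assign each monomial $T_i^{l_i}$ one and the same degree — the affine variety $X = \Spec R$ is the cone over the projective variety $Y := \Proj R$, and the complexity-one $\TT$-action descends to $Y$ with rational quotient a curve $B := Y \git \TT$. By the standard description of the class group of a cone (a Pinkham-type exact sequence relating $\Cl(X)$, $\Cl(Y)$ and the hyperplane class), $\Cl(X)$ is an extension of a finitely generated group — built from the Néron--Severi group $\mathrm{NS}(Y)$ and the classes of the finitely many $\TT$-invariant divisors $\{T_{ij} = 0\}$ — by the divisible group $\Pic^0(Y)$. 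As the general $\TT$-fibre of $Y \to B$ is a toric variety, hence satisfies $h^1(\OOO) = 0$, one obtains $\Pic^0(Y) \cong \Pic^0(B) = \mathrm{Jac}(B)$. Consequently $\Cl(X)$ is finitely generated if and only if $\mathrm{Jac}(B) = 0$, that is, if and only if the quotient curve $B$ has genus $0$; when $g(B) > 0$ the divisible group $\mathrm{Jac}(B) \cong (\CC/\Lambda)^{\,g(B)}$ immediately exhibits $\Cl(X)$ as not finitely generated. This also explains (i)--(iii), where $B \cong \PP^1$ and the displayed finitely generated groups are precisely the Néron--Severi and toric contributions.

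The task for (iv) therefore reduces to a genus computation: I would show that $g(B) \geq 1$ in every gcd-configuration lying outside (i)--(iii). In the torus-quotient coordinates $s_i := T_i^{l_i}$ the curve $B$ is the normalization of the locus cut out by the linearized relations $s_0 + s_1 + s_2 = 0,\ \theta_1 s_1 + s_2 + s_3 = 0,\ \ldots$, realized as an abelian cover of the resulting $\PP^1$ whose ramification is governed by the multiplicities $\mathfrak{l}_i$ and the pairwise gcds $\gcd(\mathfrak{l}_i,\mathfrak{l}_j)$. Computing $g(B)$ by Riemann--Hurwitz for this cover yields a closed formula in the $\gcd(\mathfrak{l}_i,\mathfrak{l}_j)$, which I would then evaluate case by case. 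The guiding model is $r = 2$, $n_i = 1$: here $B$ is the curve $\{s_0 + s_1 + s_2 = 0\}$ in the relevant weighted projective plane, and the computation is transparent — for instance $T_0^2 + T_1^2 + T_2^2$ (the $(2,2,2)$ pattern of (iii)) gives a conic with $g(B) = 0$, whereas $T_0^3 + T_1^3 + T_2^3$ gives an elliptic curve with $g(B) = 1$, matching the fact that the cone over an elliptic curve has non-finitely generated class group.

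The main obstacle is to pin down the genus threshold exactly and to confirm that (i)--(iii) are the only vanishing cases. This has two delicate aspects. First, the value $2$ in case (iii) is genuinely special: I must verify that the balanced triple $(2,2,2)$ is the unique configuration with three simultaneously non-coprime indices that still yields $g(B) = 0$, while any deviation — say $(3,3,3)$, or $(2,2,2)$ with an enlarged multiplicity producing a further coincidence — already forces $g(B) \geq 1$; this requires careful bookkeeping of the branch contributions when several multiplicities share common factors. Second, I must check that passing to $r \geq 3$ and to $n_i \geq 2$ cannot lower the genus below what the three critical indices $0,1,2$ impose: the additional relations and variables only enlarge the finitely generated (toric and Néron--Severi) part and leave $g(B)$, hence the divisible part, controlled by the leading gcds. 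Once the genus formula and these two checks are in place, (iv) follows, since any configuration not of type (i)--(iii) lands in the range $g(B) \geq 1$ and thus contributes the divisible, non-finitely generated summand $\mathrm{Jac}(B)$ to $\Cl(X)$.
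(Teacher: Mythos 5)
There is a genuine gap: your proposal only seriously engages with part (iv), while parts (ii) and (iii) --- the actual content of the theorem --- are asserted rather than proved. The sentence claiming that for $B \cong \PP^1$ ``the displayed finitely generated groups are precisely the N\'eron--Severi and toric contributions'' is exactly the statement that needs a computation, and the cone/Pinkham reduction does not perform it: writing $\Cl(X) \cong \Cl(Y)/\ZZ\cdot[D]$ for $Y = \Proj R$ merely trades the problem for computing $\Cl(Y)$ of a weighted projective complete intersection with torus action, which is no easier than the original question. In particular, nothing in your sketch produces the exponents $c-1$ on the torsion factors in (ii), the rank $\tilde{n}$, or the factor $\ZZ/(\mathfrak{l}_0\mathfrak{l}_1\mathfrak{l}_2/4)\ZZ$ in (iii). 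That last factor is genuinely delicate: the paper obtains it by a sandwich argument --- an upper bound from an explicit relation matrix $P$ built from $K_0$-primality of the variables and the common $\Cl(X)$-degree of the Cox ring relations (Construction~\ref{constr:ClRel}, Lemma~\ref{lemma:matrix}, via determinantal divisors), a lower bound from the compulsory torsion (Smith normal form of $P_0'$, Lemma~\ref{lem:comptors}), plus, crucially for case (iii), an extra order-two class constructed by the principal-ideal argument of Lemma~\ref{lem:principal}/Proposition~\ref{prop:zerodegree}(ii), since the compulsory torsion alone gives only $\ZZ/(\mathfrak{l}_0/2)\times\ZZ/(\mathfrak{l}_1/2)\times\ZZ/(\mathfrak{l}_2/2)$ rather than the cyclic group of order $\mathfrak{l}_0\mathfrak{l}_1\mathfrak{l}_2/4$. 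The rank computation likewise rests on knowing the total coordinate space explicitly (Proposition~\ref{prop:rkClGroup} via Proposition~\ref{prop:coxRAP0}). None of these mechanisms has a counterpart in your plan, so the isomorphisms in (ii) and (iii) remain unestablished.

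For part (iv) your route is viable in spirit but also unfinished by your own admission (``I would show'', ``the main obstacle is to pin down the genus threshold''). It amounts to re-proving the rationality criterion: the paper instead invokes \cite[Cor.~5.8]{ArBrHaWr} (Remark~\ref{rem:ratcrit}) together with the fact that a complexity-one variety is rational if and only if its divisor class group is finitely generated, making (iv) a one-line consequence. Your Riemann--Hurwitz bookkeeping --- verifying that $(2,2,2)$ is the unique triple-coincidence pattern with $g(B)=0$ and that adding relations or variables cannot lower the genus --- is precisely the content of that cited criterion, and is left open in your text. Two smaller technical points you would also need to settle on your route: the quotient map $Y \dashrightarrow B$ is a priori only rational, so identifying the divisible part of $\Cl(Y)$ with $\mathrm{Jac}(B)$ requires an argument on a normal (possibly singular) $Y$; and the Demazure/Pinkham sequence must be set up for the quasicone grading with rational weights, where $\OOO_Y(1)$ need not be Cartier.
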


The first assertion of the theorem characterizes factoriality and is~\cite[Thm.~10.1]{HaHe}. Assertions~(ii) and~(iii) are proven in 
Section~\ref{sec:ProofOfTheorem}.
Note that due to~\cite[Cor.~5.8]{ArBrHaWr}, a trinomial 
variety is rational if and only if we are in one of the situations~(i), (ii) or~(iii) of the above theorem; see also Remark \ref{rem:ratcrit}.
Any trinomial variety admits a torus action of 
complexity one and thus is rational if and only 
if its divisor class group is finitely generated,
which in turn yields Assertion~(iv) of the 
theorem.

As immediate consequences of Theorem~\ref{thm:divclgr}, 
we can figure out in Corollaries~\ref{cor:freeab}, \ref{cor:finclgr} 
and~\ref{cor:cyclic} the situations, where the divisor 
class group is free finitely generated, finite
or cyclic.
In particular, we see that among the trinomial
varieties only the affine quadric
$V(T_1^2+T_2^2+T_3^2)$ has divisor class group
of order two, i.e., is half-factorial in
the sense of~\cite{GeHa}.
Moreover, in Corollary~\ref{cor:isosing}, we treat
the case of an isolated singularity.
Finally, observe that any trinomial hypersurface 
as in Situation~(iii) of Theorem \ref{thm:divclgr} goes beyond the framework of 
\cite{La,SchSt,SiSP}; see also Remark~\ref{rem:scheja}.

In the proof of Theorem~\ref{thm:divclgr}, we use
Cox ring theory of rational varieties with torus
action of complexity one.
Recall that the \emph{Cox ring} of a normal variety $X$
with finitely generated divisor class group is
defined as
$$
\mathcal{R}(X)
\ =  \
\bigoplus_{\Cl(X)} \Gamma(X,\mathcal{O}(D)),
$$
where we refer to~\cite{ArDeHaLa} for the
details.
For the Cox ring of a trinomial variety $X$
we provide generators and relations
in~\cite[Prop.~2.6]{HaWr2}.
Based on this, we can compute the divisor
class group of $X$ explicitly.
As a by-product, we obtain a concrete description
of the divisor class group grading
on the Cox ring of a rational trinomial variety;
see Corollary~\ref{cor:grad},
complementing~\cite[Prop.~2.6]{HaWr2}.

Using Corollary~\ref{cor:grad} we observe a new 
feature of the iteration of Cox rings of 
varieties with torus action of complexity one.
We say that a trinomial variety $X$ is 
\emph{hyperplatonic} if 
$\mathfrak{l}_0^{-1} + \ldots + \mathfrak{l}_r^{-1} > r-1$ 
holds.
In this case, reordering 
$\mathfrak{l}_0, \ldots, \mathfrak{l}_r$
decreasingly,
yields $\mathfrak{l}_i = 1$ for all $i \ge 3$
and the triple
$(\mathfrak{l}_0,\mathfrak{l}_1,\mathfrak{l}_2)$
is \emph{platonic}, i.e., one of 
$ 
(5,3,2),
(4,3,2),
(3,3,2),
(x,2,2)
$
and 
$(x,y,1)$,
where 
$x,y \in \ZZ_{\ge 1}$.
We call $(\mathfrak{l}_0,\mathfrak{l}_1,\mathfrak{l}_2)$
the \emph{basic platonic triple} of $X$.
Note that the hyperplatonic trinomial varieties 
comprise all total coordinate spaces
of affine log terminal varieties admitting a torus action of complexity one; 
see~\cite{ArBrHaWr} for the precise statement.
We say that a
variety $X$ admits \emph{iteration of Cox rings}, 
if there exists a chain
\begin{center}
\begin{tikzcd}
X_p
\arrow[r,"\quot H_{p-1}"]
&[0.5cm]
X_{p-1}
\arrow[r,"\quot H_{p-2}"]
&[0.5cm]
\quad
\cdots
\quad
\arrow[r,"\quot H_{2}"]
&[0.5cm]
X_{2}
\arrow[r,"\quot H_{1}"]
&[0.5cm]
X_1:=X,
\end{tikzcd}
\end{center}
where $X_p$ is a factorial affine variety, and in each step, $X_{i+1}$ is the total coordinate
space of $X_i$ and $H_i = \Spec \, \CC[\Cl(X_i)]$ holds.
Due to~\cite[Thm.~1.1]{HaWr2} a rational variety with torus action of complexity one and only constant torus-invariant functions admits iteration of Cox rings if and only if its total coordinate space is factorial or hyperplatonic. In particular any of the occurring total coordinate spaces is again hyperplatonic and
there are exactly the following possible sequences of basic platonic triples arising from Cox ring iterations, see \cite[Cor.~1.4]{HaWr}:
\vspace{2mm}
\begin{enumerate}
\item 
$(1,1,1) \rightarrow (2,2,2) \rightarrow (3,3,2) \rightarrow (4,3,2)$,
\item 
$(1,1,1) \rightarrow (x,x,1) \rightarrow (2x,2,2)$,
\item 
$(1,1,1) \rightarrow (x,x,1) \rightarrow (x,2,2)$,
with $x$ odd,
\item 
$(\mathfrak{l}_{01}^{-1} \mathfrak{l}_0,
\mathfrak{l}_{01}^{-1} \mathfrak{l}_1,1)
\rightarrow
(\mathfrak{l}_0, \mathfrak{l}_1,1)$,
with $\mathfrak{l}_{01} := \gcd(\mathfrak{l}_0, \mathfrak{l}_1) > 1$.
\end{enumerate}
\vspace{2mm}

Note that a hyperplatonic trinomial variety with basic platonic triple $(5,3,2)$ is factorial and therefore does not occur in these chains.
In the above iterations, 
the steps corresponding to  
$(1,1,1)  \rightarrow (x,x,1)$ 
as well as the step of Case~(iv) 
are exactly those steps, where
$H_i$ is a torus.
The remaining parts of the iteration 
chains can be represented by Cox ring 
iterations of Du Val surfaces: 
Any platonic triple $(a,b,c)$ 
defines a Du Val singularity by
$$
Y(a,b,c) 
\ := \ 
\V(T_1^a+T_2^b+T_3^c) 
\ \subseteq \ 
\CC^3.
$$
Case~(i) corresponds to the chain 
$\CC^2 \rightarrow A_1 \rightarrow D_4 \rightarrow E_6$
and $(x,x,1) \rightarrow (x,2,2)$ with $x$ odd resp. 
$(x,x,1) \rightarrow (2x,2,2)$ correspond 
to the chains $\CC^2 \rightarrow A_x$ resp. 
$\CC^2 \rightarrow A_{2x}$.
Overall we obtain the following structural result which we prove in Section \ref{sec:app}.

\begin{theorem}
\label{thm:link2surf}
Let $X$ be a hyperplatonic variety with basic
platonic triple
$(\mathfrak{l}_0, \mathfrak{l}_1, \mathfrak{l}_2)$.
Denote by
$(\mathfrak{l}_0', \mathfrak{l}_1', \mathfrak{l}_2')$
the basic platonic triple of the total coordinate space $X'$ 
of $X$.
Then there is a commutative diagram
$$ 
\xymatrix{
{X'}
\ar[d]_{\git \TT'}
\ar[rr]^{\mathrm{TCS}}
&&
{X}
\ar[d]^{\git \TT}
\\
{Y(\mathfrak{l}_0', \mathfrak{l}_1', \mathfrak{l}_2')}
\ar[rr]_{\mathrm{TCS}}
&&
{Y(\mathfrak{l}_0, \mathfrak{l}_1, \mathfrak{l}_2)},
}
$$
where the horizontal arrows labelled "TCS'' 
are total coordinate spaces 
and the downward arrows are good quotients 
by torus actions. 
\end{theorem}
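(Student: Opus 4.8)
The plan is to realize all four morphisms explicitly and then reduce commutativity to a computation on coordinate rings. The two horizontal arrows are the total coordinate space maps (characteristic spaces) attached to the finitely generated divisor class groups from Theorem~\ref{thm:divclgr}; here we use that by the iteration theory of~\cite{HaWr2} the total coordinate space $X'$ is again a hyperplatonic trinomial variety, with basic platonic triple $(\mathfrak{l}_0', \mathfrak{l}_1', \mathfrak{l}_2')$, and that the upper arrow is the good quotient $X' \git H$ by the quasitorus $H = \Spec\,\CC[\Cl(X)]$. The two vertical arrows will be produced by one and the same recipe: an explicit monomial map onto the first three variable blocks whose fibres are orbits of a suitable subtorus of the big torus. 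The real point will be that this recipe is compatible with passing to total coordinate spaces, so that the square closes up.

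First I would construct the vertical quotient $X \to Y := Y(\mathfrak{l}_0, \mathfrak{l}_1, \mathfrak{l}_2)$. Writing $l_{ij} = \mathfrak{l}_i m_{ij}$ with $\gcd(m_{i1}, \ldots, m_{in_i}) = 1$ and setting $S_i := \prod_j T_{ij}^{m_{ij}}$, one has $T_i^{l_i} = S_i^{\mathfrak{l}_i}$, so that the leading trinomial $T_0^{l_0} + T_1^{l_1} + T_2^{l_2}$ turns into the defining equation $S_0^{\mathfrak{l}_0} + S_1^{\mathfrak{l}_1} + S_2^{\mathfrak{l}_2}$ of $Y$. I would then take $\TT := \bigl(\bigcap_{i=0}^r \ker(T_i^{l_i})\bigr)^\circ$ inside the big torus $(\CC^*)^n$; as the characters $T_i^{l_i}$ are supported on disjoint blocks they are independent, giving $\dim \TT = \dim X - 2$. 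Since $\TT$ is connected and fixes each $S_i^{\mathfrak{l}_i} = T_i^{l_i}$, it fixes each $S_i$ itself, so $(T_{ij}) \mapsto (S_0, S_1, S_2)$ is $\TT$-invariant and lands in $Y$. To see it is the good quotient I would compute $\mathcal{O}(X)^{\TT}$: the invariant monomials are generated by $S_0, \ldots, S_r$, and on $X$ the hyperplatonic condition $\mathfrak{l}_i = 1$ for $i \geq 3$ makes $T_i^{l_i} = S_i$ a fixed polynomial in $S_0^{\mathfrak{l}_0}, S_1^{\mathfrak{l}_1}, S_2^{\mathfrak{l}_2}$ via the triangular linear relations coming from the remaining trinomials. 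Eliminating $S_3, \ldots, S_r$ leaves $\mathcal{O}(X)^{\TT} = \CC[S_0, S_1, S_2]/(S_0^{\mathfrak{l}_0} + S_1^{\mathfrak{l}_1} + S_2^{\mathfrak{l}_2}) = \mathcal{O}(Y)$. The same construction applied to $X'$ yields the left vertical arrow $X' \to Y' := Y(\mathfrak{l}_0', \mathfrak{l}_1', \mathfrak{l}_2')$ as a good quotient by the corresponding torus $\TT'$.

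Next I would identify the lower horizontal arrow as a total coordinate space map. The surface $Y(\mathfrak{l}_0, \mathfrak{l}_1, \mathfrak{l}_2)$ is itself the minimal trinomial variety carrying the triple $(\mathfrak{l}_0, \mathfrak{l}_1, \mathfrak{l}_2)$, with all $n_i = 1$ and $r = 2$, so the explicit grading description of Corollary~\ref{cor:grad} and the classification~\cite[Cor.~1.4]{HaWr} apply to it. The key observation to extract is that the passage from the basic platonic triple to that of the total coordinate space depends only on the triple and not on the ambient trinomial variety; this is exactly the content of the transformation rules in cases~(i)--(iv) of the introduction (with the smooth cases $(x,y,1)$ handled directly). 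Consequently the basic platonic triple of the total coordinate space of $Y$ equals that of the total coordinate space of $X$, namely $(\mathfrak{l}_0', \mathfrak{l}_1', \mathfrak{l}_2')$, so the total coordinate space of $Y$ is $Y(\mathfrak{l}_0', \mathfrak{l}_1', \mathfrak{l}_2') = Y'$; this supplies the lower arrow and recovers the chains $\CC^2 \to A_1 \to D_4 \to E_6$ and $\CC^2 \to A_x$ mentioned above.

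Finally I would verify commutativity. Since all four morphisms are dominant maps of affine varieties, it suffices to compare the two pullbacks $\mathcal{O}(Y) \to \mathcal{O}(X')$ on the generators $S_0, S_1, S_2$, which amounts to comparing two monomial exponent matrices after unwinding the $\Cl(X)$-grading of $\mathcal{R}(X)$ from Corollary~\ref{cor:grad}. The cleanest framing is to exhibit the whole square as quotients of the common object $X'$ by subquotients of a single abelian group: one checks that $\TT$, $\TT'$, $H$ and the Cox quasitorus $H_Y = \Spec\,\CC[\Cl(Y)]$ fit into compatible short exact sequences of diagonalizable groups, so that quotienting $X'$ by the extension of $H$ by $\TT'$ agrees, in either order, with the iterated quotient. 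I expect the main obstacle to lie precisely here: proving that the subtorus $\TT$ defining the right vertical quotient and the quasitorus $H$ defining the upper quotient descend compatibly --- equivalently, that the relevant exponent lattices sit in the expected exact sequence --- is the genuine work, and it is where the concrete description of the divisor class group grading established earlier becomes indispensable.
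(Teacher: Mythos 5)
Your construction of the two vertical arrows and your identification of the bottom arrow coincide in substance with the paper's proof: your torus $\TT = \bigl(\bigcap_i \ker(T_i^{l_i})\bigr)^{\circ}$ is exactly the quasitorus with character group $\ZZ^{n+m}/\im(\tilde P^*)$ for the matrix $\tilde P$ with rows $\frac{1}{\mathfrak{l}_i}l_i$ used in the paper (the span of the primitive vectors $l_i/\mathfrak{l}_i$ is saturated, so kernel-component and quasitorus agree), your invariant ring $\CC[S_0,S_1,S_2]/\bangle{S_0^{\mathfrak{l}_0}+S_1^{\mathfrak{l}_1}+S_2^{\mathfrak{l}_2}}$ is the paper's degree-zero Veronese subalgebra $S$, and your observation that the platonic-triple transformation of Proposition~\ref{prop:coxRAP0} depends only on $(\mathfrak{l}_0,\mathfrak{l}_1,\mathfrak{l}_2)$ supplies the bottom TCS arrow just as the paper's graded isomorphism $S' \cong \mathcal{O}(Y')$ does. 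Up to this point the outline is sound, modulo routine points left implicit (e.g.\ $(\CC[T_{ij},S_k]/I)^{\TT} = \CC[T_{ij},S_k]^{\TT}/I^{\TT}$ by reductivity, and the free variables $S_k$, which the paper dispatches by reducing to $m=0$).

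The genuine gap is commutativity, and you flag it yourself: ``proving that the relevant exponent lattices sit in the expected exact sequence is the genuine work'' --- but that work, which constitutes essentially the entire proof in the paper, is never carried out, and it is not routine. Concretely, one must show that the composed Veronese embedding $S \to S' \to R'$ (determined by $\tilde P$ together with the grading matrix $P(S)$ of Corollary~\ref{cor:grad}) factors through the embedding $R \to R'$ determined by $P(R)$. The paper does this by writing the composed embedding as a matrix of the same shape as (\ref{equ:P}) resp.\ (\ref{equ:P2}), in which each block $A(c(i),l_{i,1})$ of Lemma~\ref{lemma:matrix} is replaced by the block $B(c(i),l_{i,1})$ whose last $n_i$ unit-matrix rows are collapsed to the single row $(l_{i,1}/\mathfrak{l}_{i,1},\ldots,l_{i,1}/\mathfrak{l}_{i,1})$, with an additional collapse of rows $2,\ldots,n_0+1$ in case (\ref{equ:P2}); it then proves that the row lattice of this matrix is a \emph{saturated} sublattice of the row lattice of $P(R)$, reduced blockwise to the observation that the $E_{n_i}$-rows of $A(c(i),l_{i,1})$ span a saturated sublattice in which $(l_i/\mathfrak{l}_i,\ldots,l_i/\mathfrak{l}_i)$ again lies saturated. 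Saturation is precisely what makes $\TT$, $\TT'$, $H$ and $H_Y$ fit into the compatible exact sequences of diagonalizable groups you postulate; without it, the two composites $X' \to Y(\mathfrak{l}_0,\mathfrak{l}_1,\mathfrak{l}_2)$ have no reason to coincide, since the two degree-zero subalgebras being compared sit inside $R'$ via a priori different monomial embeddings. So your proposal identifies the correct reduction but stops exactly where the proof begins; to complete it you would need to perform this lattice computation, or an equivalent verification that the grading of Corollary~\ref{cor:grad} restricts, on the subalgebra generated by the monomials $T_{i,t}^{l_{i,t}/\mathfrak{l}_{i,t}}$, to the Cox ring grading of $Y'$.
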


Finally, as a last application, we
use Theorem~\ref{thm:divclgr} to compute
the divisor class groups of all total
coordinate spaces, i.e.~spectra of the
Cox rings, of normal rational
varieties~$X$ with torus action of complexity one.
Here, Theorem~\ref{thm:divclgr} directly
settles the case of only constant invariant
functions and Corollary~\ref{cor:ClType1} deals
with the remaining case.

\
\\
{\em Acknowledgement:}
Parts of this paper have been written during a stay at Simon Fraser University in Burnaby (BC).
The author is thankful to Nathan Ilten for his kind hospitality.

\tableofcontents

\section{Background on Trinomial varieties}\label{sec:trinomialVarieties}
In this section we recall the basic facts about trinomial varieties as described in the introduction.
We encode a trinomial variety $X$ via a ring $R(A,P_0)$ where $A$ and $P_0$ are matrices storing the coefficients and the exponents of the trinomials defining $X$. The notation is used i.a. in \cite{HaHe,HaWr}, 
where trinomial varieties appear as Cox rings 
of normal rational varieties with torus action of complexity one, i.e. an effective torus action such that the general torus orbit is of codimension one.

\begin{construction}
\label{constr:RAP0}
Fix integers $r,n > 0$, $m \ge 0$ and a partition 
$n = n_0+\ldots+n_r$
with positive integers $n_i$.
For every $i = 0, \ldots, r$, fix a tuple
$l_{i} \in \ZZ_{> 0}^{n_{i}}$ and define a monomial
$$
T_{i}^{l_{i}}
\ := \
T_{i1}^{l_{i1}} \cdots T_{in_{i}}^{l_{in_{i}}}
\ \in \
\CC[T_{ij},S_{k}; \ 0 \le i \le r, \ 1 \le j \le n_{i}, \ 1 \le k \le m].
$$
We will also write $\CC[T_{ij},S_{k}]$ for the 
above polynomial ring. 
Let $A:= (a_0, \ldots, a_r)$ be a $2 \times (r+1)$~matrix with pairwise 
linearly independent columns $a_i \in \CC^2$. 
For every $i = 0, \dots, r-2$ we define
$$
g_{i}
\ :=  \
\det
\left[
\begin{array}{lll}
T_i^{l_i} & T_{i+1}^{l_{i+1}} & T_{i+2}^{l_{i+2}}
\\
a_i & a_{i+1}& a_{i+2}
\end{array}
\right]
\ \in \
\CC[T_{ij},S_{k}].
$$
We build up an $r \times (n+m)$~matrix 
from the exponent vectors $l_0, \ldots, l_r$ of these 
polynomials:
$$
P_{0}
\ := \
\left[
\begin{array}{ccccccc}
-l_{0} & l_{1} &  & 0 & 0  &  \ldots & 0
\\
\vdots & \vdots & \ddots & \vdots & \vdots &  & \vdots
\\
-l_{0} & 0 &  & l_{r} & 0  &  \ldots & 0
\end{array}
\right].
$$
Denote by $P_0^*$ the transpose of $P_0$ and consider 
the projection
$$
Q \colon \ZZ^{n+m} 
\ \to \ 
K_{0} 
\ := \ 
\ZZ^{n+m}/\mathrm{im}(P_{0}^{*}).
$$
Denote by $e_{ij},e_{k} \in \ZZ^{n+m}$ the canonical
basis vectors corresponding to the variables 
$T_{ij}$, $S_{k}$.
Define a $K_0$-grading on $\CC[T_{ij},S_{k}]$ 
by setting
$$
\deg(T_{ij}) \ := \ Q(e_{ij}) \ \in \ K_{0},
\qquad
\deg(S_{k}) \ := \ Q(e_{k}) \ \in \ K_{0}.
$$
This is the finest possible grading of
$\CC[T_{ij},S_{k}]$ leaving the variables 
and the $g_i$ homogeneous
and any other such grading coarsens this maximal one.
In particular, we have  a $K_{0}$-graded 
factor algebra
$$
R(A,P_{0})
\ := \
\CC[T_{ij},S_{k}] / \bangle{g_{0}, \dots, g_{r-2}}.
$$
\end{construction}

Note that every trinomial variety can be realized as the spectrum of a ring $R(A,P_0)$. Due to \cite{HaHeSue} these rings are normal complete intersections and admit only constant homogeneous units.
The variables $T_{ij}$ and $S_k$ are \emph{$K_0$-prime}, i.e. a homogeneous non-zero non-unit which, whenever it divides a product of homogeneous elements, it also divides one of the factors.
In general the rings $R(A,P_0)$ are not unique factorization domains but have the following similar but weaker property:
They are \emph{$K_0$-factorial}, i.e. every non-zero homogeneous non-unit is a product of $K_0$-primes.

We will make frequent use of the following rationality criterion for trinomial varieties
from \cite[Cor.~5.8]{ArBrHaWr}:

\begin{remark}\label{rem:ratcrit}
Let $X:=\Spec \, R(A,P_0)$ be an adjusted trinomial variety and set ${\mathfrak{l}_i := \gcd(l_{i1}, \ldots, l_{in_i})}$.
Then $X$ is rational if 
and only if one of the following conditions
holds:
\begin{enumerate}
\item
We have $\gcd(\mathfrak{l}_i,\mathfrak{l}_j) = 1$
for all $0 \le i < j \le r$, in other words, $R(A,P_0)$ 
is factorial.
\item
We have
$\gcd(\mathfrak{l}_0,\mathfrak{l}_1) > 1$
and $\gcd(\mathfrak{l}_i,\mathfrak{l}_j) = 1$ 
holds
whenever $j \notin \left\{0,1\right\}$.
\item
We have
$
\gcd(\mathfrak{l}_0,\mathfrak{l}_1) = 
\gcd(\mathfrak{l}_0,\mathfrak{l}_2) =
\gcd(\mathfrak{l}_1,\mathfrak{l}_2) =
2
$ 
and $\gcd(\mathfrak{l}_i,\mathfrak{l}_j) = 1$ 
holds
whenever $j \not\in \{0,1,2\}$.
\end{enumerate} 
\end{remark}

In order to prove our main results we make use of
the explicit description of the total coordinate space, i.e. the spectrum of the Cox ring, of a
rational trinomial variety given in \cite[Prop. 2.6]{HaWr2}. We recall basic facts about Cox rings and their geometric meaning in general and then specialize to Cox rings of trinomial varieties.

Let $X$ be a normal variety with only constant invertible global functions and finitely generated divisor class group $\Cl(X)$.
The Cox ring of $X$ is the $\Cl(X)$-graded ring 
$$ 
\mathcal{R}(X)
\ =  \
\bigoplus_{\Cl(X)} \Gamma(X,\mathcal{O}(D)),
$$
where $\Gamma(X,\mathcal{O}(D))$ is the sheaf of global sections of the divisorial algebra defined by the divisor class of $D$; see \cite{ArDeHaLa} for the details. 
The grading of the divisor class group defines a quasitorus action of the {\em characteristic quasitorus}  $H:=\Spec\ \CC[\Cl(X)]$ on the {\em total coordinate space} $\bar{X}:=\Spec\ \RRR(X)$.
Moreover $X$ can be regained as a good quotient $\hat X \rightarrow X$ by the quasitorus action of $H$ on an invariant big open subset $\hat X\subseteq \bar X$. If $X$ is affine, then $\hat X = \bar X$ holds and one has $\RRR(X)^H\cong \OOO(X)$.

Computing the Cox ring of a variety in terms of generators and relations is often hardly possible. However, for rational trinomial varieties there is an explicit description: 
Let $X :=\Spec \, R(A,P_0)$ be an adjusted rational trinomial variety and set $\mathfrak{l} := \gcd(\mathfrak{l}_0,\mathfrak{l}_1,\mathfrak{l}_2)$.
Then, due to \cite[Lemma 2.5]{HaWr2},
the number~$c(i)$ of irreducible components 
of $V(X,T_{ij})$, where $j = 1,\ldots, n_i$, 
is given by
\begin{center}
\renewcommand{\arraystretch}{1.8} 
\begin{tabular}{c|c|c|c|c}
$i$ & $0$ & $1$ & $2$ & $\ge 3$
\\
\hline
$c(i)$
& 
$\gcd(\mathfrak{l}_1,\mathfrak{l}_2)$
& 
$\gcd(\mathfrak{l}_0,\mathfrak{l}_2)$
& 
$\gcd(\mathfrak{l}_0,\mathfrak{l}_1)$
&
$\frac{1}{\mathfrak{l}} 
\gcd(\mathfrak{l}_1,\mathfrak{l}_2)
\gcd(\mathfrak{l}_0,\mathfrak{l}_2)
\gcd(\mathfrak{l}_0,\mathfrak{l}_1)$
\end{tabular}
\end{center}
and the total coordinate space can be computed in the following way:

\begin{proposition}\label{prop:coxRAP0}\cite[Prop. 2.6]{HaWr2}
\label{prop::isotropy}
Let $R(A,P_0)$ be a non-factorial
ring defining an adjusted rational trinomial variety $X:=\Spec \, R(A,P_0)$.
Set
$$
P_1
\ := \
\left[
\begin{array}{ccccccccc}
\frac{-1}{\gcd(\mathfrak{l}_0,\mathfrak{l}_1)} l_0
& 
\frac{1}{\gcd(\mathfrak{l}_0,\mathfrak{l}_1)} l_1
& 0 &  \dots & &0 & 0 & \dots & 0
\\[5pt]
\frac{-1}{\gcd(\mathfrak{l}_0,\mathfrak{l}_2)} l_0
& 0 
& 
\frac{1}{\gcd(\mathfrak{l}_0,\mathfrak{l}_2)} l_2 
& 0 & & 0& & &\\
-l_0 & 0 &  & l_3 & & 0 &\vdots &&\vdots
\\
\vdots &  &  & \vdots & \ddots & \vdots && &
\\
-l_0 & 0 & \dots & 0      &        & l_r & 0& \dots & 0%\\
\end{array}
\right],
$$
let $c(i)$ be as above and define numbers 
\begin{center}
$n' \ 
:=\ 
c(0)n_0 + \ldots + c(r)n_r,\qquad
r' 
\ 
:=
\ 
c(0)+ \ldots + c(r)-1$,
\\
$n_{i,1}, \ldots, n_{i,c(i)} \ 
:= 
\ n_i,\qquad 
l_{ij,1}, \ldots, l_{ij,c(i)}
\ 
:= 
\ 
\gcd ( (P_{1})_{1,ij},\ldots,(P_{1})_{r,ij} ).
$
\end{center}
Build up an
$r'\times (n'+m)$-matrix
$P_0'$ as in Construction \ref{constr:RAP0}
using the vectors
$l_{i,t} := 
(l_{i1,t}, \ldots, l_{in_i,t}) \in \ZZ^{n_{i,t}}$.
Then, choosing a suitable matrix $A'$,
the affine variety $\Spec \, R(A',P'_0)$ is the 
total coordinate space of $X$.
\end{proposition}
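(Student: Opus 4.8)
The plan is to realize $R(A',P_0')$ as the Cox ring $\RRR(X)$ by invoking the universal characterization of total coordinate spaces from~\cite{ArDeHaLa}: once one exhibits a finitely generated normal algebra $\mathcal S$, graded by $\Cl(X)$, that is $\Cl(X)$-factorial, admits only constant homogeneous units, and satisfies $\mathcal S^H \cong \OOO(X)$ for $H = \Spec\,\CC[\Cl(X)]$ with grading group canonically identified with $\Cl(X)$, then $\mathcal S$ is forced to be $\RRR(X)$. So the task is to produce such an $\mathcal S$ in the explicit shape $R(A',P_0')$ and to verify these properties.

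The geometric content of the construction is that passing to the Cox ring turns every invariant prime divisor into a principal one, and the obstruction to factoriality of $R(A,P_0)$ is exactly that the invariant divisor $V(X,T_{ij})$ is reducible. By~\cite[Lemma 2.5]{HaWr2} it decomposes into precisely $c(i)$ irreducible components. The first step is therefore to introduce, for each group $i$, one block $T_{i1,t},\dots,T_{in_i,t}$ of variables per component $t=1,\dots,c(i)$, which accounts for the count $n' = \sum_i c(i)n_i$, and to lift the relations $g_0,\dots,g_{r-2}$ to new trinomials $g_0',\dots,g_{r'-2}'$. The multiplicities governing this lift are read off the matrix $P_1$: dividing the first two rows of $P_0$ by $\gcd(\mathfrak{l}_0,\mathfrak{l}_1)$ and $\gcd(\mathfrak{l}_0,\mathfrak{l}_2)$ records how the components meet the fibres of the quotient $X \to X\git\TT$ over the curve, and the entries $l_{ij,t}=\gcd((P_1)_{1,ij},\dots,(P_1)_{r,ij})$ are the resulting exponents of the new monomials. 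Assembling the $l_{i,t}$ as in Construction~\ref{constr:RAP0} yields $P_0'$, and $A'$ is chosen so that the $g_i'$ are honest trinomials with pairwise independent columns and pairwise distinct coefficients.

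Next I would equip $R(A',P_0')$ with the Cox grading. The monomial substitution expressing each old $T_{ij}$ through the new $T_{ij,t}$ induces a degree map onto $\Cl(X)$, hence an action of $H$, and I would check that the ring of $H$-invariants is exactly $R(A,P_0)\cong\OOO(X)$, so that $\Spec\,R(A',P_0')\to X$ is the good quotient by $H$. Here one uses that the $T_{ij,t}$ and $S_k$ stay prime and that, by~\cite{HaHeSue}, only constant homogeneous units occur; together with $\Cl(X)$-factoriality these are exactly the hypotheses of the universal property. As a consistency check, when $\Cl(X)$ is torsion free the new gcds $\mathfrak{l}_i'$ come out pairwise coprime, so by the factoriality criterion of~\cite{HaHe} (Assertion~(i) of Theorem~\ref{thm:divclgr}) the ring $R(A',P_0')$ is then genuinely a unique factorization domain, matching the fact that the iteration of Cox rings terminates precisely at a factorial ring.

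The main obstacle is the combinatorial heart of the second paragraph: one has to pin down the exponents in the substitution $T_{ij}\mapsto \prod_t T_{ij,t}^{\,\mu_{t}}$, confirm that the $\gcd$-formula for the $l_{ij,t}$ produces integral exponent vectors assembling into a well-formed $P_0'$ of the asserted size $r'\times(n'+m)$, and---most delicately---prove that the invariant subring returns $R(A,P_0)$ on the nose while the grading group is identified with $\Cl(X)$ itself rather than with a proper cover. Establishing $\Cl(X)$-factoriality, i.e.\ that every homogeneous nonunit factors into $\Cl(X)$-primes, is the remaining nontrivial point; the rest is the formal invocation of the universal property together with gcd bookkeeping.
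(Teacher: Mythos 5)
This proposition is not proved in the paper at all: it is imported verbatim from \cite[Prop.~2.6]{HaWr2}, so the comparison must be made against that source, where the proof rests on the structure theory of torus actions of complexity one. Measured against either standard, your text is a proof plan rather than a proof. You correctly identify the overall mechanism (one new variable $T_{ij,t}$ for each prime component $D_{ij,t}$ of $\V(X;T_{ij})$, with $c(i)$ counted by \cite[Lemma~2.5]{HaWr2}, and a grading by $\deg T_{ij,t}=[D_{ij,t}]$), but your closing paragraph defers exactly the content of the statement: the derivation of the exponent formula $l_{ij,t}=\gcd((P_1)_{1,ij},\ldots,(P_1)_{r,ij})$, the verification that the $H$-invariants of $R(A',P_0')$ return $R(A,P_0)$, and the $\Cl(X)$-factoriality of the new ring. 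The gcd formula is the heart of the proposition; saying the entries of $P_1$ are ``read off'' as the multiplicities with which the components meet the fibres of $X\to X\git\TT$ is precisely the claim to be established, not an argument for it. In \cite{HaWr2} this is not verified from scratch: there one uses that $X=\Spec\,R(A,P_0)$ carries a complexity-one torus action and invokes the known description of Cox rings of such varieties (\cite[Thm.~1.3]{HaSu}, \cite[Proof of Prop.~6.6]{ArBrHaWr}), which produces the trinomial shape $R(A',P_0')$ and the exponent data simultaneously, so that no separate universal-property check is needed.

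There is also a gap in the recognition criterion you invoke. Normality, a $\Cl(X)$-grading, $\Cl(X)$-factoriality, constant homogeneous units and $\mathcal{S}^H\cong\OOO(X)$ do not yet force $\mathcal{S}\cong\RRR(X)$: by the characterization of characteristic spaces in \cite{ArDeHaLa} one additionally needs the $H$-action on $\Spec\,\mathcal{S}$ to be strongly stable, i.e.\ free on an invariant open subset with complement of codimension at least two, and one must prove --- not assume --- that the grading group is identified with $\Cl(X)$ via $\deg T_{ij,t}=[D_{ij,t}]$; the paper itself sources exactly this identification externally in the discussion following the proposition. Finally, your substitution $T_{ij}\mapsto\prod_t T_{ij,t}^{\mu_t}$ leaves the $\mu_t$ undetermined, whereas $K_0$-primality of $T_{ij}$ forces $\div_X(T_{ij})=\sum_t D_{ij,t}$ with all multiplicities one --- the fact underlying relation~(\ref{equ1}) in the proof of Theorem~\ref{thm:divclgr} --- so even this elementary step is left open. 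In sum, the strategy is plausible and roughly parallel to the cited proof's endpoint, but none of the computations specific to $P_1$, and none of the hypotheses of the recognition theorem, are actually verified.
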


In particular the total coordinate space of a trinomial variety is again a trinomial variety and the ring $R(A',P_0')$ as constructed above is a factor ring of the polynomial ring
$$\CC[T_{ij,t}, S_k'; \ 1 \leq i \leq r,\ 1 \leq j \leq n_i,\ 1 \leq t \leq c(i), \ 1 \leq k \leq m].$$
Moreover
the grading of
$R(A', P_0')$ with respect to the divisor class group of $X$ is given as follows: 
For fixed $0 \leq i \leq r$ and $1 \leq j \leq n_i$ 
let $D_{ij,1}, \hdots D_{ij,c(i)}$ be the prime divisors inside 
$\V(X;T_{ij})$.
Then due to \cite[Thm. 1.3]{HaSu} and \cite[Proof of Prop. 6.6]{ArBrHaWr} the variables $T_{ij,t}$ in $R(A',P_0')$ are of degree  
$$\deg(T_{ij,t})\ =\ [D_{ij,t}]\ 
\in\ 
\Cl(X).$$
Now consider the prime divisor $\V(X ;S_k) = E_k$.
Then, as $S_k$ is a $K_0$-prime element in the $K_0$-factorial ring $R(A,P_0)$, the prime divisor $E_k$ is principal due to \cite[Prop. 1.5.3.3]{ArDeHaLa}. Thus using \cite[Thm. 1.3]{HaSu} and \cite[Proof of Prop. 6.6]{ArBrHaWr} we obtain
$$
\deg(S_k') \ =\  [E_k]\ =\ [0] \ \in\  \Cl(X).
$$
Note that the degrees of the variables $T_{ij,t}$ and $S_k'$ 
generate the divisor class group of $X$.

Due to \cite[Thm. 1.7]{HaWr} 
the grading on $R(A', P_0')$ defined by the divisor class group
is a downgrading of the grading defined via $P_0'$ 
as in Construction \ref{constr:RAP0}. It can be obtained in the following way:

\begin{construction}
\label{constr:RAPdown}
Let $R(A,P_0)$ be a ring as in Construction \ref{constr:RAP0}.
Choose an integral
$s \times (n + m)$ matrix $d$ and build the 
$(r+s) \times (n + m)$ stack matrix
$$
P 
\ := \
\left[
\begin{array}{c}
P_0
\\
d
\end{array}
\right].
$$
We require the columns of $P$ to be pairwise 
different primitive vectors generating
$\QQ^{r+s}$ as a vector space. 
Let $P^*$ denote the transpose of $P$ and 
consider the projection
$$
Q \colon 
\ZZ^{n+m} 
\ \to \ 
K 
\ := \ 
\ZZ^{n+m} / \mathrm{im}(P^*).
$$
Denoting as before by $e_{ij}, e_k \in \ZZ^{n+m}$ the 
canonical basis vectors corresponding to
the variables $T_{ij}$ and $S_k$, we obtain a 
$K$-grading on $\CC[T_{ij}, S_k]$ by setting
$$
\deg(T_{ij}) \ := \ Q(e_{ij} ) \ \in \ K,
\qquad\qquad
\deg(S_k) \ := \ Q(e_k) \ \in \ K.
$$
This $K$-grading coarsens the $K_0$-grading of 
$\CC[T_{ij},S_k ]$ given in Construction~\ref{constr:RAP0}
and thus defines a grading on $R(A,P_0)$. We denote the ring $R(A,P_0)$ endowed with this grading by $R(A,P)$.
\end{construction}

As a first glimpse towards the computation of the divisor class group of a rational trinomial variety $X$ we describe a specific
finite subgroup of $\Cl(X)$ 
which necessarily appears whenever the Cox ring of $X$ is realized as in Construction \ref{constr:RAPdown}:

\begin{construction}
Let $X$ be any variety having
a graded ring $R(A,P)$ as in Construction \ref{constr:RAPdown} as its Cox ring.
Set $K_0:= \ZZ^{n+m}/\mathrm{im}(P_0)^*$ and denote by
$(K_0)^{\mathrm{tors}}$ 
the torsion subgroup of $K_0$.
Then the group 
$$(K_0)^{\mathrm{tors}}\  \subseteq\ \ZZ^{n+m}/\im(P^*)\ =\ K\ \cong\ \Cl(X)$$
is a finite subgroup of $\Cl(X)$ due to injectivity of $P^*$. 
We call $\Cl(X)^{\mathrm{ctors}}:=(K_0)^{\mathrm{tors}}$ the \emph{compulsory torsion of $\Cl(X)$}.  
\end{construction}

\section{Proof of Theorem \ref{thm:divclgr}}
\label{sec:ProofOfTheorem}
This section is dedicated to the proof of Theorem \ref{thm:divclgr}.
The idea is to approximate the divisor class group of a trinomial variety $X$ in the following sense: In a first step we compute in Proposition \ref{prop:rkClGroup} and Lemma \ref{lem:comptors} the rank and the compulsory torsion of $\Cl(X)$ to determine a subgroup of $\Cl(X)$.
In a second step we fix a subgroup of the group of Weil divisors of $X$ mapping surjectively onto $\Cl(X)$ and investigate the kernel of this map; 
this includes i.a. computing the common $\Cl(X)$-degree of the defining relations of the Cox ring of $X$, see Proposition \ref{prop:zerodegree}. 
This enables us to realize $\Cl(X)$ as a factor group of a group of the same rank and finally determine $\Cl(X)$ as stated in Theorem \ref{thm:divclgr}.
We work in the notation of Section \ref{sec:trinomialVarieties}.

\begin{proposition}
\label{prop:rkClGroup}
Let $X:= \Spec \, R(A,P_0)$ be an adjusted non-factorial rational trinomial variety. Then the divisor class group $\Cl(X)$ is of rank
$$ 
\sum_{i=0}^r ((c(i)-1)n_i-c(i) +1).
$$
\end{proposition}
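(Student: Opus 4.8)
The plan is to compute the rank of $\Cl(X)$ directly from the presentation of the Cox ring given in Proposition~\ref{prop:coxRAP0}. Recall that $\Cl(X) \cong K = \ZZ^{n'+m}/\mathrm{im}((P_0')^*)$, so the rank of $\Cl(X)$ equals $(n'+m) - \rg(P_0')$, where $P_0'$ is the $r' \times (n'+m)$ exponent matrix built in Proposition~\ref{prop:coxRAP0}. The first step is therefore to assemble the two ingredients $n'+m$ and $\rg(P_0')$ in terms of the combinatorial data $n_i$ and $c(i)$. We already have $n' = \sum_{i=0}^r c(i)n_i$ from the construction, so the number of columns is $n'+m = \sum_{i=0}^r c(i)n_i + m$.

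The second step is to determine $\rg(P_0')$. The matrix $P_0'$ has $r' = \sum_{i=0}^r c(i) - 1$ rows, each of which records the exponents of one of the defining trinomials of the total coordinate space $\Spec\, R(A',P_0')$. Since $R(A',P_0')$ is, by Proposition~\ref{prop:coxRAP0}, itself a trinomial variety, hence a complete intersection, the $r'$ relations $g_0', \ldots, g_{r'-2}'$ are independent, and the transpose $(P_0')^*$ is injective by the same argument invoked for the compulsory torsion (injectivity of $P^*$). Concretely, by the block structure inherited from Construction~\ref{constr:RAP0}, the rows of $P_0'$ are $\QQ$-linearly independent: each new trinomial after the first introduces a fresh block of variables $T_{i,t}$ that does not appear in the earlier rows, so no nontrivial linear combination of rows can vanish. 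This gives $\rg(P_0') = r' = \sum_{i=0}^r c(i) - 1$.

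Combining the two counts, the rank of $\Cl(X)$ is
$$
(n'+m) - r'
\ = \
\Bigl(\sum_{i=0}^r c(i)n_i + m\Bigr) - \Bigl(\sum_{i=0}^r c(i) - 1\Bigr).
$$
The remaining step is purely bookkeeping: I must reconcile this with the claimed value $\sum_{i=0}^r \bigl((c(i)-1)n_i - c(i) + 1\bigr)$, which rewrites as $\sum_i c(i)n_i - \sum_i n_i - \sum_i c(i) + (r+1)$. Matching the two expressions forces an accounting of the $m$ free variables $S_k$ and of the $\sum_i n_i$ terms: the point is that the variables $S_k$ (whose classes are trivial in $\Cl(X)$, as computed in the excerpt) together with a rank-$\sum_i n_i$ contribution must cancel against $m$ and the $(r+1)$. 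I expect the honest difficulty to lie precisely here — not in the linear algebra of $P_0'$, which is routine, but in verifying that the naive column-minus-row count really collapses to the stated formula once one correctly tracks which generators are free and which satisfy relations. In particular one must be careful that the passage to the total coordinate space, which splits each $T_{ij}$ into $c(i)$ copies $T_{ij,t}$, is compatible with the claim that the degrees of the $T_{ij,t}$ and $S_k'$ generate $\Cl(X)$; this is what guarantees the surjection $\ZZ^{n'+m} \twoheadrightarrow \Cl(X)$ needed to read off the rank from the cokernel presentation.
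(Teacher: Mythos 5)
Your opening identification is false, and this sinks the argument at the root rather than in the final bookkeeping. The group $\ZZ^{n'+m}/\im((P_0')^*)$ is $K_0'$, the grading group of the \emph{finest} grading of $R(A',P_0')$ leaving the variables and the relations homogeneous (Construction~\ref{constr:RAP0} applied to $(A',P_0')$). The $\Cl(X)$-grading of the Cox ring is a proper \emph{coarsening} of this one: by Construction~\ref{constr:RAPdown} (and as used explicitly at the start of the proof of Lemma~\ref{lem:comptors}), one has $\Cl(X)\cong\ZZ^{n'+m}/\im((P')^*)$, where $P'$ is obtained from $P_0'$ by stacking $s'$ additional rows $d'$, and $P'$ has full row rank $r'+s'$. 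So the rank is $(n'+m)-(r'+s')$, not $(n'+m)-\rg(P_0')=(n'+m)-r'$, and your count overshoots by exactly $s'=n+m-r=\dim(X)-1>0$; the two expressions you hope to reconcile in your last paragraph genuinely disagree. A concrete counterexample: for the $A_2$-singularity $X=\V(T_{01}^2+T_{11}^2+T_{21}^3)$ one has $\Cl(X)\cong\ZZ/3\ZZ$ of rank $0$, while $n'=4$, $m=0$, $\rg(P_0')=r'=3$ gives your value $1$. (Indeed $\mathrm{rk}(K_0')=\dim(\bar X)-1$, the complexity-one torus on the total coordinate space, which is in general much larger than $\mathrm{rk}(\Cl(X))$.) Worse, determining $s'$ from the presentation amounts to already knowing the rank of $\Cl(X)$, so the approach is circular as it stands.

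The paper sidesteps presentations of the grading group entirely: since $\bar X\to X$ is a good quotient by the characteristic quasitorus $H=\Spec\,\CC[\Cl(X)]$, whose dimension equals $\mathrm{rk}(\Cl(X))$, one has $\mathrm{rk}(\Cl(X))=\dim(\bar X)-\dim(X)$. Both varieties are complete intersections by Proposition~\ref{prop:coxRAP0}, so $\dim(\bar X)=n'+m-(r'-1)$ and $\dim(X)=n+m-(r-1)$, and the difference is $\sum_{i=0}^r((c(i)-1)n_i-c(i)+1)$. If you wanted to salvage your route, you would have to prove $s'=\dim(X)-1$, which is equivalent to this dimension count — so the quotient argument is the shorter and non-circular way to the formula.
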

\begin{proof}
The rank of the divisor class group of $X$ equals the difference between the dimension of $X$ and the dimension of its total coordinate space
$\overline{X}:= \Spec \, R(A', P_0')$ as described in
Proposition \ref{prop:coxRAP0}. We conclude
\begin{align*}
\mathrm{rk}(\Cl(X)) \ =\  \dim(\overline{X}) - \dim(X) 
&\ =\  n'-(r'-1) - (n-(r-1)) \\
&\ =\  \sum_{i=0}^r ((c(i)-1)n_i-c(i) +1).
\end{align*}
\end{proof}

\begin{lemma}\label{lem:comptors} 
Let $X:=\Spec \, R(A,P_0)$ be an adjusted non-factorial rational
trinomial variety.
\begin{enumerate}
\item
If $c:=\gcd(\mathfrak{l}_0, \mathfrak{l}_1) >1$ and 
$\gcd(\mathfrak{l}_i, \mathfrak{l}_j) = 1$ holds
whenever $j \notin \left\{0, 1\right\}$, then $X$ has compulsory torsion isomorphic to 
$$
\left(\ZZ/ \mathfrak{l}_2\ZZ\right)^{c-1}
\times \dots  \times 
\left(\ZZ/ \mathfrak{l}_r\ZZ\right)^{c-1}.$$

\item
If $\gcd(\mathfrak{l}_0, \mathfrak{l}_1) = \gcd(\mathfrak{l}_1, \mathfrak{l}_2) = \gcd(\mathfrak{l}_0, \mathfrak{l}_2)=2$ and 
$\gcd(\mathfrak{l}_i, \mathfrak{l}_j) = 1$ holds
whenever ${j \notin \left\{0, 1,2\right\}}$, then $X$ has compulsory torsion isomorphic to 
$$
\ZZ/(\mathfrak{l}_0/2) \ZZ
\times
\ZZ/ (\mathfrak{l}_1/2)\ZZ
\times
\ZZ/(\mathfrak{l}_2/2)\ZZ
\times
\left(\ZZ/\mathfrak{l}_3\ZZ\right)^{3}
\times \dots  \times 
\left(\ZZ/\mathfrak{l}_r\ZZ\right)^{3}.$$
\end{enumerate}

\begin{proof}
We prove (i).
With our subsequent considerations we obtain that the divisor class group of $X$
is given as $\ZZ^{n'+m}/\im ((P')^*)$, where $P'$ is some $(r'+s') \times (n'+m)$ stack matrix
$$
\left[
\begin{array}{c}
P_0'
\\
d'
\end{array}
\right],
$$
of full row rank. Moreover due to Proposition \ref{prop::isotropy} the matrix
$P_0'$ is the $r' \times (n'+m)$ matrix build up
by the exponent vectors $c^{-1}l_0$,
$c^{-1}l_1$ and $c$ copies
$l_{i,1}, \dots, l_{i,c}$ of $l_i$ for $i \geq 2$. 
Thus, to obtain the assertion,
we compute the elementary divisors of $P_0'$:
Suitable elementary column operations transform $P_0'$
into
$$
\left[
\begin{array}{cccccccc}
c^{-1}\mathfrak{l}_0
& 
c^{-1}\mathfrak{l}_1
& 0 &  \dots &0 & 0 & \dots & 0
\\[5pt]
c^{-1}\mathfrak{l}_0
& 0 
& 
\mathfrak{l}_{2,1}
& & 0& & &\\
\vdots &  &  & \ddots & \vdots && &
\\
c^{-1}\mathfrak{l}_0 
& 0 & \dots  &        & \mathfrak{l}_{r,c} & 0& \dots & 0%\\
\end{array}
\right].
$$
As $\gcd(\mathfrak{l}_i, \mathfrak{l}_j) = 1$
holds for $i,j \notin \left\{0,1\right\}$
we obtain for $1 \leq t \leq c$ that the 
$(r'-t+1)$-th determinantal divisor of $P_0'$ equals
$\mathfrak{l}_2^{c-t} \cdots \mathfrak{l}_r^{c-t}$.
This proves the assertion.

For the proof of (ii)
we note that in this case $P_0'$
is built up
by $2$ copies of 
$1/2l_{0}, 1/2 l_1$ and $1/2 l_2$
and 
$4$ copies of each term
$l_i$ for $i \geq 3$. 
Then, applying the same 
arguments as above,
we obtain the assertion.
\end{proof}
\end{lemma}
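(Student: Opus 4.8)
The plan is to reduce the entire statement to a single Smith-normal-form computation. By the construction of the compulsory torsion, $\Cl(X)^{\mathrm{ctors}}$ is exactly the torsion subgroup of $K_0' := \ZZ^{n'+m}/\im((P_0')^*)$, where $R(A',P_0')$ is the Cox ring of $X$ furnished by Proposition~\ref{prop:coxRAP0}. Hence everything is governed by the elementary divisors of the integer matrix $P_0'$, and the task is to read them off in each case. First I would write $P_0'$ explicitly: specializing the component numbers $c(i)$ to the situation at hand (so $\mathfrak{l}=1$, $c(0)=c(1)=1$, $c(2)=\dots=c(r)=c$ in case~(i); and $\mathfrak{l}=2$, $c(0)=c(1)=c(2)=2$, $c(i)=4$ for $i\ge 3$ in case~(ii)) and invoking Proposition~\ref{prop:coxRAP0}, one sees that $P_0'$ is a Construction~\ref{constr:RAP0} matrix assembled from the divided blocks $l_0/c,\,l_1/c$ (resp.\ $l_0/2,l_1/2,l_2/2$) together with the prescribed copies of the remaining exponent blocks.

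The computational heart is a column reduction inside each exponent block: since the entries of a block have greatest common divisor equal to the relevant $\mathfrak{l}_i$ (resp.\ $\mathfrak{l}_0/c$, $\mathfrak{l}_1/c$), integral column operations collapse each block to a single entry, turning $P_0'$ into a bordered-diagonal ("arrowhead") matrix with one reference column carrying $\mathfrak{l}_0/c$ in every row and a diagonal carrying the various $\mathfrak{l}_i$ with their multiplicities. For case~(i) I would then compute the determinantal divisors $\Delta_k$ of this reduced matrix directly: using that $\mathfrak{l}_2,\dots,\mathfrak{l}_r$ are pairwise coprime and coprime to $\mathfrak{l}_0/c$ and $\mathfrak{l}_1/c$, one shows $\Delta_{r'-t+1} = (\mathfrak{l}_2\cdots\mathfrak{l}_r)^{\,c-t}$ for $1\le t\le c$ and $\Delta_k=1$ for the smaller indices. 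Forming the successive quotients $\Delta_k/\Delta_{k-1}$ leaves exactly $c-1$ nontrivial elementary divisors, each equal to $\mathfrak{l}_2\cdots\mathfrak{l}_r$; a Chinese-remainder splitting then gives $(\ZZ/\mathfrak{l}_2\ZZ)^{c-1}\times\cdots\times(\ZZ/\mathfrak{l}_r\ZZ)^{c-1}$, as claimed.

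For case~(ii) I would run the same reduction, now with two copies each of $l_0/2,l_1/2,l_2/2$ and four copies each of $l_i$ for $i\ge 3$. The decisive observation is that although $\mathfrak{l}_0,\mathfrak{l}_1,\mathfrak{l}_2$ share the common factor $2$, the pairwise-gcd-$2$ hypothesis makes $\mathfrak{l}_0/2,\mathfrak{l}_1/2,\mathfrak{l}_2/2,\mathfrak{l}_3,\dots,\mathfrak{l}_r$ pairwise coprime, so the elementary divisors can be determined one prime $p$ at a time, and for each $p$ at most one of these numbers is divisible by $p$. I would then split into two regimes: if $p\mid\mathfrak{l}_s$ with $s\ge 3$, the value sits four times on the diagonal while the reference entry is a $p$-unit, and localizing at $p$ yields three copies of $p^{v_p(\mathfrak{l}_s)}$; if $p\mid\mathfrak{l}_s/2$ with $s\le 2$, the value appears with total multiplicity two (once on the reference column and once on the diagonal when $s=0$, twice on the diagonal when $s\in\{1,2\}$), and in either case the localized relation matrix collapses to a single copy of $p^{v_p(\mathfrak{l}_s/2)}$. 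Reassembling over all primes produces $\ZZ/(\mathfrak{l}_0/2)\ZZ\times\ZZ/(\mathfrak{l}_1/2)\ZZ\times\ZZ/(\mathfrak{l}_2/2)\ZZ\times(\ZZ/\mathfrak{l}_3\ZZ)^3\times\cdots\times(\ZZ/\mathfrak{l}_r\ZZ)^3$.

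The main obstacle is precisely this Smith-normal-form bookkeeping for the bordered-diagonal matrix: one must keep exact track of how many times each $\mathfrak{l}_i$ lands on the diagonal versus the reference column and of how the coprimality hypotheses collapse the determinantal divisors. Case~(ii) is the delicate one, because the naive "$c-1$ copies" pattern from case~(i) breaks down once $\mathfrak{l}_0,\mathfrak{l}_1,\mathfrak{l}_2$ share the factor $2$ and, moreover, the reference block itself becomes $p$-divisible when $p\mid\mathfrak{l}_0$; the multiplicities $1$ for the $\mathfrak{l}_s/2$ and $3$ for the $\mathfrak{l}_i$ with $i\ge 3$ have to be extracted from the localized relations rather than guessed by analogy with the first case.
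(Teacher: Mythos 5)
Your proposal is correct and follows essentially the same route as the paper: both identify the compulsory torsion with the torsion subgroup of $\ZZ^{n'+m}/\im((P_0')^*)$ for the Cox ring matrix $P_0'$ of Proposition~\ref{prop:coxRAP0} (with the same specialization of the $c(i)$), collapse each exponent block by integral column operations to the same bordered-diagonal matrix with reference entry $\mathfrak{l}_0/c$ (resp.\ $\mathfrak{l}_0/2$), and extract the elementary divisors via the coprimality hypotheses. Your prime-by-prime localization in case~(ii) is just a fleshed-out execution of the Smith-form computation that the paper compresses into ``applying the same arguments as above,'' and it yields the same multiplicities (one copy of each $\mathfrak{l}_s/2$ for $s\le 2$, three copies of each $\mathfrak{l}_i$ for $i\ge 3$).
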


\begin{proposition}
\label{prop:zerodegree}
Let ${X:= \Spec \, R(A,P_0)}$ be an adjusted non-factorial rational trinomial variety with total coordinate space $\Spec \, R(A',P_0')$. 
\begin{enumerate}
    \item 
    If $\gcd(\mathfrak{l}_0, \mathfrak{l}_1) >1$ and
    $\gcd(\mathfrak{l}_i, \mathfrak{l}_j) = 1$ holds, whenever $j \notin \left\{0, 1\right\}$, then the defining relations of $R(A',P_0')$ 
    have $\Cl(X)$-degree zero.
    \item
    If $\gcd(\mathfrak{l}_0, \mathfrak{l}_1) = \gcd(\mathfrak{l}_1, \mathfrak{l}_2) = \gcd(\mathfrak{l}_0, \mathfrak{l}_2)=2$ and 
    $\gcd(\mathfrak{l}_i, \mathfrak{l}_j) = 1$ holds
    whenever ${j \notin \left\{0, 1,2\right\}}$,  then the degree of the defining relations of $R(A',P_0')$ generates a subgroup of order two in $\Cl(X)$.
\end{enumerate}
\end{proposition}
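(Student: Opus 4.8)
The plan is to compute the $\Cl(X)$-degree of each defining relation $g_i'$ of $R(A',P_0')$ directly from the description of the divisor class group grading established in Section~\ref{sec:trinomialVarieties}. Recall that $\Cl(X) = \ZZ^{n'+m}/\im((P')^*)$, where $P'$ is the stack matrix $[P_0'; d']$, and that the degree of a variable $T_{ij,t}'$ is the image $Q'(e_{ij,t})$ of the corresponding canonical basis vector under the projection $Q'\colon \ZZ^{n'+m}\to K = \Cl(X)$. Each relation $g_i'$ is a trinomial built as in Construction~\ref{constr:RAP0} from three consecutive exponent vectors $l'_{a}, l'_{a+1}, l'_{a+2}$ of $P_0'$, and by $K$-homogeneity of the $g_i'$ under the $K_0'$-grading, all three monomials of $g_i'$ share the same $\Cl(X)$-degree; so the degree of the relation is the common degree of its monomials, which I compute as $Q'$ applied to any one of the monomial exponent vectors.

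First I would fix a single relation $g'$ of $R(A',P_0')$ and write its three monomial exponent vectors explicitly as rows indexed by the variables of the enlarged polynomial ring. The key observation is that the difference of any two of these three exponent vectors lies in $\im((P_0')^*)$ by construction of $P_0'$ (this is exactly the relation encoded by the rows of $P_0'$), so it already maps to zero in $K_0'$; what remains is to determine the image of the common monomial class under the further downgrading to $K = \Cl(X)$. In case~(i), the plan is to show this common class is actually zero: because $\gcd(\mathfrak{l}_i,\mathfrak{l}_j)=1$ for $j\notin\{0,1\}$, the exponent vectors defining the relations lie in $\im((P_0')^*)$ after the copies have been introduced, and I expect the compulsory-torsion computation of Lemma~\ref{lem:comptors}(i) together with the rank count of Proposition~\ref{prop:rkClGroup} to force the degree into the image of $(P')^*$, i.e.\ to be trivial in $\Cl(X)$.

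For case~(ii) the strategy is the same setup but the conclusion changes: I would exhibit an explicit element $\delta \in \ZZ^{n'+m}$ representing the common monomial degree and show that its class in $\Cl(X)$ is nonzero but has order exactly two. The natural candidate is the class coming from the "extra" symmetry among the three doubled exponent vectors $\tfrac12 l_0, \tfrac12 l_1, \tfrac12 l_2$: pairing them as in the determinant defining $g'$ produces a relation whose monomial exponent vector is congruent modulo $\im((P_0')^*)$ to something that is $2$-torsion but not trivial, matching the $\mathfrak{l}_0\mathfrak{l}_1\mathfrak{l}_2/4$-factor appearing in Theorem~\ref{thm:divclgr}(iii) versus the cyclic factors of Lemma~\ref{lem:comptors}(ii). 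Concretely, I would show $2\delta \in \im((P')^*)$ while $\delta \notin \im((P')^*)$, using that all three pairwise gcds equal $2$; the relevant $2$-torsion element is detected already at the level of $K_0'$-torsion and survives the downgrading.

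The main obstacle I anticipate is bookkeeping: tracking how a monomial of $g_i'$, expressed in the doubled/$c$-fold-copied variables $T_{ij,t}'$, projects through $Q'$, and verifying that the difference between the "naive" $K_0'$-class and the genuine $\Cl(X)$-class is captured precisely by the downgrading matrix $d'$ of Construction~\ref{constr:RAPdown}. In case~(ii) the delicate point is to certify that $\delta$ has order exactly $2$ and not $1$ or $4$: I would pin this down by computing $\delta$ against the elementary-divisor structure of $P_0'$ already worked out in the proof of Lemma~\ref{lem:comptors}(ii), where $P_0'$ is built from two copies of $\tfrac12 l_0, \tfrac12 l_1, \tfrac12 l_2$ and four copies of each $l_i$ for $i\ge 3$. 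Once the order-two element is identified with the class of the defining relation, the assertion follows.
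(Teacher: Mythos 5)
Your setup is fine as far as it goes (the three monomials of each relation share one $\Cl(X)$-degree, consecutive relations overlap so all relations share that degree, and $\deg(T_{ij,t})=[D_{ij,t}]$), but both of your decisive steps have genuine gaps. In case~(i), the inference that the rank count of Proposition~\ref{prop:rkClGroup} together with the compulsory torsion of Lemma~\ref{lem:comptors} ``forces the degree into $\im((P')^*)$'' is not valid: these are invariants of $\Cl(X)$ as an abstract group and cannot determine the class of one specific element. Also note that only \emph{differences} of the monomial exponent vectors lie in $\im((P_0')^*)$; the common class itself is typically of infinite order in $K_0'$ and dies only under the unknown downgrading $d'$. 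The paper gets the vanishing from geometry instead: since $c(0)=1$, each $\V(X;T_{0j})$ is a prime divisor, and $K_0$-primality of $T_{0j}$ in the $K_0$-factorial ring $R(A,P_0)$ makes it principal by \cite[Prop.~1.5.3.3]{ArDeHaLa}, whence $\deg(T_{0,1}^{l_{0,1}})=\sum_j l_{0j,1}[D_{0j,1}]=[0]$. Nothing in your proposal replaces this input, and the abstract data you invoke are equally consistent with a nonzero degree (indeed case~(ii) has the same flavor of rank and compulsory-torsion data but a nonzero answer).

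In case~(ii) your mechanism fails on a concrete example: for $X=\V(T_{01}^2+T_{11}^2+T_{21}^2)$ one computes $K_0'\cong\ZZ$, which is torsion-free, yet the relation degree generates $\Cl(X)\cong\ZZ/2\ZZ$; so the order-two element is \emph{not} ``detected already at the level of $K_0'$-torsion'' --- it is created entirely by the downgrading $d'$, which you never determine. Consequently neither of your two membership claims can be checked: $2\delta\in\im((P')^*)$ does not follow from the combinatorics of $P_0'$ (in the example $2\delta\notin\im((P_0')^*)$), and showing $\delta\notin\im((P_0')^*)$ is insufficient for $\delta\notin\im((P')^*)$ since the latter image is strictly larger. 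The paper supplies exactly these two missing inputs geometrically: the relations $\sum_t [D_{ij,t}]=[0]$, coming again from $K_0$-primality, give $2\delta=0$, while Lemma~\ref{lem:principal} (resting on Lemma~\ref{lem:princIdeal}) shows by an explicit computation with constant terms that the ideal $\bangle{T_1^{l_1/2}+i\,T_2^{l_2/2},\,T_0^{l_0/y}}$ is principal only for $y=1$, so the divisor $D_2=\sum_j \tfrac12 l_{0j}D_{0j,1}$ representing the degree is not principal and the degree has order exactly two. Without an argument of this kind --- some statement about (non)principality of explicit divisors on $X$, not just elementary-divisor bookkeeping on $P_0'$ --- your proof cannot be completed.
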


\begin{definition}
Let $X$ be an irreducible normal variety and $Y \subseteq X$ a prime divisor. 
Let furthermore $\mathfrak{A}:= \bangle{f_1, \dots, f_r} \leq \mathcal{O}(X)$ be any ideal. 
Then we define the \emph{order of $\mathfrak{A}$ along $Y$} to be the minimum
$\min(\mathrm{ord}_Y(f_i); \ i = 1, \dots, r)=: \mathrm{ord}_Y( \mathfrak{A})$.
\end{definition}

\begin{lemma}
\label{lem:princIdeal}
Let $X$ be an irreducible normal variety, 
$\mathfrak{A}:= \bangle{f_1, \dots, f_r} \leq \mathcal{O}(X)$ any ideal 
and $f \in \mathcal{O}(X)$.
Then the following statements are equivalent:
\begin{enumerate}
\item $\mathrm{ord}_Y(\mathfrak{A}) = \mathrm{ord}_Y(f)$ holds for all prime divisors $Y \subseteq X$.
\item  $\bangle{f} = \mathfrak{A}$ holds, i.e. $\mathfrak{A}$ is a principal ideal.
\end{enumerate}
In particular the Weil divisor $D:= \sum \mathrm{ord}_Y( \mathfrak{A})$, where the sum runs over all 
prime divisors $Y \subseteq X$, is principal if and only if $\mathfrak{A}$ is a principal ideal.
\begin{proof}
We prove (i) $\Rightarrow$ (ii).
Observe that $f\mid f_i$ holds for $i = 1, \dots, r$ as $\div(f) \leq \div(f_i)$ by construction.
In particular $\bangle{f} \supseteq \mathfrak{A}$. We prove the other inclusion.
Consider the covering $X=U_1\cup\ldots\cup U_r$, where 
$$U_i\ :=\ X \setminus (Y_{i_1} \cup \dots \cup Y_{i_{k_i}}),$$
where all prime divisors $Y$ with $\mathrm{ord}_{Y}(f_i) \neq \mathrm{ord}_Y(\mathfrak{A})$ 
occur among the $Y_{i_t}$. Then inside $U_i$ we have $f_i \mid f$. 
Thus $c_i \cdot f_i = f$ holds with $c_i \in \mathcal{O}(U)^*$. Considering the associated sheaf 
$\tilde{\mathfrak{A}}$ of $\mathfrak{A}$ we obtain $f \in \tilde{\mathfrak{A}}(X) = \mathfrak{A}$. 
The other implication is clear.
\end{proof}
\end{lemma}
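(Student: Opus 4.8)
The plan is to prove the two implications separately, with all of the substance sitting in $(i)\Rightarrow(ii)$. The implication $(ii)\Rightarrow(i)$ is pure bookkeeping: if $\mathfrak{A}=\bangle{f}$, then $f\in\mathfrak{A}$ lets me write $f$ as an $\mathcal{O}(X)$-combination of the $f_i$, whence $\mathrm{ord}_Y(f)\ge\min_i\mathrm{ord}_Y(f_i)=\mathrm{ord}_Y(\mathfrak{A})$ for every prime divisor $Y$; conversely each $f_i\in\bangle{f}$ gives $f\mid f_i$, hence $\mathrm{ord}_Y(f)\le\mathrm{ord}_Y(f_i)$ for all $i$ and so $\mathrm{ord}_Y(f)\le\mathrm{ord}_Y(\mathfrak{A})$. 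Combining both inequalities yields equality along every $Y$.

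For $(i)\Rightarrow(ii)$ I would establish the two inclusions of $\mathfrak{A}=\bangle{f}$ one at a time, and the inclusion $\mathfrak{A}\subseteq\bangle{f}$ is the soft half. Hypothesis $(i)$ gives $\mathrm{ord}_Y(f)=\mathrm{ord}_Y(\mathfrak{A})\le\mathrm{ord}_Y(f_i)$ for every prime divisor $Y$, i.e. $\div(f)\le\div(f_i)$, so the rational function $f_i/f$ has no pole in codimension one. Since $X$ is normal we have $\mathcal{O}(X)=\bigcap_{\codim Y=1}\mathcal{O}_{X,Y}$, and a rational function regular in codimension one extends to a global regular function; thus $f_i/f\in\mathcal{O}(X)$, that is $f\mid f_i$, and $\mathfrak{A}=\bangle{f_1,\dots,f_r}\subseteq\bangle{f}$.

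The reverse inclusion $f\in\mathfrak{A}$ is where the real difficulty lies and is the step I expect to be the main obstacle. The natural local-to-global attempt is: for each $i$ put $U_i:=X\setminus\bigcup\{Y:\mathrm{ord}_Y(f_i)\ne\mathrm{ord}_Y(\mathfrak{A})\}$, so that on $U_i$ the generator $f_i$ realises the order of $\mathfrak{A}$ along every prime divisor meeting $U_i$; then $\div(f)|_{U_i}=\div(f_i)|_{U_i}$ forces $f=c_if_i$ with $c_i\in\mathcal{O}(U_i)^*$, so $f$ is a section of the ideal sheaf $\tilde{\mathfrak{A}}$ over each $U_i$, and one hopes to glue these to $f\in\tilde{\mathfrak{A}}(X)=\mathfrak{A}$.

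The crux is precisely this gluing. Because $\mathrm{ord}_Y(\mathfrak{A})=\min_i\mathrm{ord}_Y(f_i)$ is attained for every $Y$, no prime divisor is bad for all $i$ at once, so the $U_i$ cover $X$ only away from a closed set $Z$ of codimension $\ge 2$; the argument therefore yields $f\in\tilde{\mathfrak{A}}(X\setminus Z)$, and concluding $f\in\mathfrak{A}$ needs sections of $\tilde{\mathfrak{A}}$ to extend across $Z$, i.e. $\tilde{\mathfrak{A}}(X)=\tilde{\mathfrak{A}}(X\setminus Z)$. This is exactly the divisoriality (reflexivity) of $\mathfrak{A}$, and it is where I would concentrate the effort: I would assume, or extract from the situation at hand, that $\mathfrak{A}$ equals its divisorial hull $\{g\in\mathcal{O}(X):\div(g)\ge\sum_Y\mathrm{ord}_Y(\mathfrak{A})\,Y\}$, in which case $(i)$ reads $\mathfrak{A}=\{g:\div(g)\ge\div(f)\}=\bangle{f}$ and the implication closes. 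Without such a hypothesis the step genuinely fails: for $X=\CC^2$, with $\mathfrak{A}$ the ideal of the origin and $f=1$, one has $\mathrm{ord}_Y(\mathfrak{A})=0=\mathrm{ord}_Y(f)$ for all prime divisors $Y$ while $\mathfrak{A}\ne\bangle{f}$. Pinning down this divisoriality is thus the heart of the proof, after which the final assertion follows by reading the equivalence off the Weil divisor $D=\sum_Y\mathrm{ord}_Y(\mathfrak{A})\,Y$, which equals $\div(f)$ under $(i)$.
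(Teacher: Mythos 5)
Your proposal retraces the paper's own route step for step --- (ii) $\Rightarrow$ (i) by the two order inequalities, the inclusion $\mathfrak{A} \subseteq \bangle{f}$ from $\div(f) \le \div(f_i)$ plus normality, then the local-to-global attempt with the open sets $U_i = X \setminus \bigcup_Y Y$ (union over the prime divisors with $\mathrm{ord}_Y(f_i) \neq \mathrm{ord}_Y(\mathfrak{A})$) and the associated sheaf $\tilde{\mathfrak{A}}$ --- and the point where you stop is exactly the point where the paper's proof goes wrong. The paper asserts without justification that $X = U_1 \cup \ldots \cup U_r$. As you observe, the fact that $\min_i \mathrm{ord}_Y(f_i)$ is attained along every prime divisor $Y$ only guarantees that the generic point of each divisor lies in some $U_i$; the $U_i$ need cover only $X \setminus Z$ for a closed set $Z$ of codimension at least two, so the gluing yields $f \in \tilde{\mathfrak{A}}(X \setminus Z)$, not $f \in \tilde{\mathfrak{A}}(X) = \mathfrak{A}$. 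Your counterexample makes this concrete and in fact refutes the published statement, not merely its proof: for $X = \CC^2$, $\mathfrak{A}$ the maximal ideal $\bangle{z_1,z_2}$ of the origin and $f = 1$, one has $U_1 \cup U_2 = \CC^2 \setminus \{0\}$, condition (i) holds, yet $\mathfrak{A}$ is not principal; likewise $D = 0$ is principal while $\mathfrak{A}$ is not, so the final ``in particular'' of the lemma fails as well. No proof could close this gap without an extra hypothesis.

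Your diagnosis of the missing hypothesis is also the right one: the implication (i) $\Rightarrow$ (ii) holds precisely when $\mathfrak{A}$ is divisorial, i.e. $\mathfrak{A} = \{g \in \mathcal{O}(X) : \div(g) \ge \sum_Y \mathrm{ord}_Y(\mathfrak{A})\, Y\}$, equivalently when sections of $\tilde{\mathfrak{A}}$ extend over closed sets of codimension two; under that assumption your one-line closure $\mathfrak{A} = \{g : \div(g) \ge \div(f)\} = \bangle{f}$ (normality again) is a complete and cleaner argument than the paper's. Be aware that the flaw is not innocuous downstream: Lemma \ref{lem:principal} invokes exactly the broken direction, inferring from ``$D_y$ principal'' that the ideal $\mathfrak{A}_y = \bangle{T_1^{1/2\,l_1} + i\, T_2^{1/2\,l_2},\ T_0^{1/y\, l_0}}$ is principal, so a correct treatment must additionally verify divisoriality of these specific ideals (or rework that lemma by other means). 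In short: your attempt is sound in all its positive parts, and where it reports an obstruction it has correctly located a genuine error in the paper rather than a defect of your own argument.
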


\begin{lemma}
\label{lem:principal}
Let ${X:= \Spec \, R(A,P_0)}$ be an adjusted non-factorial rational trinomial variety
and assume
$\gcd(\mathfrak{l}_0, \mathfrak{l}_1) = \gcd(\mathfrak{l}_1, \mathfrak{l}_2) = \gcd(\mathfrak{l}_0, \mathfrak{l}_2)=2$ and 
$\gcd(\mathfrak{l}_i, \mathfrak{l}_j) = 1$ holds,
whenever $j \notin \left\{0,1,2\right\}$.
Let $D_{0j,1}$ and $D_{0j,2}$ denote the irreducible components of $\V(X;T_{0j})$.
Then, for every
$y \in \ZZ_{\ge 0}$ with $y \mid \mathfrak{l}_0$, we 
obtain a cyclic subgroup of order $y$ in the divisor 
class group:
$$ 
\langle D_y \rangle 
\ \subseteq \
\Cl(X),
\qquad
\text{ with }\quad
D_y
\ := \ 
\sum_{j=1}^{n_0}\frac{1}{y}l_{0j} D_{0j,1}.
$$

\begin{proof}
Consider the ideal 
$\mathfrak{A}_y \subseteq R(A,P)$ generated by $T_1^{1/2l_1} + i\cdot T_{2}^{1/2l_2}$
and $T_0^{1/y\cdot l_0}$.
Then, as the irreducible components $D_{0j,1}$ and $D_{0j,2}$ of $\V(X; T_{0j})$ are of the form
$$D_{0j,1} 
\ 
=\ 
\V(T_{0j}, T_1^{1/2l_1}+ i \cdot T_2^{1/2l_2})
\quad \text{and} \quad 
D_{0j,2}
\ 
=\ 
\V(T_{0j}, T_1^{1/2l_1}- i \cdot T_2^{1/2l_2}),$$
we have
$D_y = \sum_{Y} \mathrm{ord}_Y( \mathfrak{A}_y)$,
where $Y$ runs over all prime divisors of $X$.
Applying Lemma \ref{lem:princIdeal} it thus suffices to show that the ideal $\mathfrak{A}_y$ is principal if and only if $y =1$ holds.
Note that $\mathfrak{A}_1 = \bangle{T_1^{1/2l_1}+ iT_2^{1/2l_2}}$ holds in $R(A,P_0)$.
So let $y \neq 1$ and assume there is an $f \in \mathfrak{A}_y$ with $\bangle{f} = \mathfrak{A}$. 
Then there exist $g_1, g_2, h_1, h_2 \in \CC[T_{ij}, S_k]$ with
$g_1 \cdot f + I = T_0^{1/y \cdot l_0} + I$ 
and 
$g_2 \cdot f +I = T_1^{1/2l_1}+i T_2^{1/2l_2}+I$
and
$$h_1\cdot  T_0^{1/y \cdot l_0}+ h_2 \cdot (T_1^{1/2l_1}+i T_2^{1/2l_2})+ I\ =\ f+I.$$
Inserting the third formula into the first one we obtain 
$$T_0^{1/y\cdot l_0} + I\ =\ g_1\cdot h_1 \cdot  T_0^{1/y \cdot l_0}+ g_1 \cdot h_2 \cdot (T_1^{1/2l_1}+i T_2^{1/2l_2})+ I$$
and so in particular
\begin{equation}\label{equ5}
h
\ 
:=
\ 
(g_1\cdot h_1-1) \cdot  T_0^{1/y \cdot l_0}+ g_1 \cdot h_2 \cdot (T_1^{1/2l_1}+i T_2^{1/2l_2})\ \in\  I.
\end{equation}
As there can not occur any term $T_0^{1/y \cdot l_0}$ in $I$ for $y \neq 1$, 
we conclude that $g_1$ and $h_1$ each have a constant term. 
Inserting the third formula above into the second,
we obtain a constant term in $g_2$ and $h_2$ with similar arguments. 
But this leads to a term ${\lambda \cdot(T_1^{1/2l_1} + i \cdot T_2^{1/2l_2})}$
with $\lambda \neq 0$ in (\ref{equ5}); a contradiction to $h \in I$.
\end{proof}
\end{lemma}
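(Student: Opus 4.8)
The plan is to pin down the class $[D_y]$ through a concrete ideal and then read off its order from the principality behaviour of that ideal, with Lemma~\ref{lem:princIdeal} doing the translation. Since $\gcd(\mathfrak{l}_0,\mathfrak{l}_1)=\gcd(\mathfrak{l}_1,\mathfrak{l}_2)=2$ forces $2\mid l_{1j}$ and $2\mid l_{2j}$ for all $j$, the binomials $u:=T_1^{1/2l_1}+iT_2^{1/2l_2}$ and $\bar u:=T_1^{1/2l_1}-iT_2^{1/2l_2}$ are genuine elements of $R(A,P)$, and (in the normalisation underlying the given components) the defining relation yields $u\bar u=T_1^{l_1}+T_2^{l_2}=-T_0^{l_0}$. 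For each $y\mid\mathfrak{l}_0$ I introduce $\mathfrak{A}_y:=\bangle{u,\,T_0^{(1/y)l_0}}\subseteq R(A,P)$, noting $(1/y)l_{0j}\in\ZZ$ since $y\mid\mathfrak{l}_0\mid l_{0j}$.

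First I would establish the divisor identity $D_y=\sum_Y\ord_Y(\mathfrak{A}_y)$, the sum running over all prime divisors of $X$. This is an order computation resting on the explicit components $D_{0j,1}=\V(T_{0j},u)$ and $D_{0j,2}=\V(T_{0j},\bar u)$: along $D_{0j,1}$ the factor $\bar u$ is a unit, so $\ord_{D_{0j,1}}(u)=\ord_{D_{0j,1}}(T_0^{l_0})=l_{0j}$, while $\ord_{D_{0j,1}}(T_0^{(1/y)l_0})=(1/y)l_{0j}$; hence the minimum equals $(1/y)l_{0j}$ and is realised by the $T_0$-generator. Along every $D_{0j,2}$ and along every prime divisor outside $\Supp(T_0^{(1/y)l_0})$ the minimum of the two orders is $0$, because there either $u$ or $T_0^{(1/y)l_0}$ is a unit. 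Summing gives exactly $D_y=\sum_j(1/y)l_{0j}D_{0j,1}$, and by Lemma~\ref{lem:princIdeal} the divisor $D_y$ is principal if and only if $\mathfrak{A}_y$ is principal.

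The heart of the argument is the claim that $\mathfrak{A}_y$ is principal if and only if $y=1$. For $y=1$ this is immediate: $T_0^{l_0}=-u\bar u\in\bangle{u}$, so $\mathfrak{A}_1=\bangle{u}$. For $y>1$ I would argue by contradiction, assuming $\mathfrak{A}_y=\bangle{f}$. Writing $R(A,P)=\CC[T_{ij},S_k]/I$ with $I=\bangle{g_0,\dots,g_{r-2}}$, principality supplies $g_1,g_2,h_1,h_2\in\CC[T_{ij},S_k]$ with $g_1f\equiv T_0^{(1/y)l_0}$, $g_2f\equiv u$ and $f\equiv h_1T_0^{(1/y)l_0}+h_2u$ modulo $I$. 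Substituting the last congruence into the first produces
$$
h\ :=\ (g_1h_1-1)\,T_0^{(1/y)l_0}+g_1h_2\,u\ \in\ I.
$$
The decisive structural fact is that, for $y>1$, no element of $I$ can contain the pure monomial $T_0^{(1/y)l_0}$, since every generator $g_i$ carries only the full powers $T_i^{l_i}$ and no single product of a polynomial with a term of some $g_i$ can equal $T_0^{(1/y)l_0}$. Comparing coefficients in $h$ then forces $g_1$ and $h_1$ to have nonzero constant terms, and the symmetric substitution into $g_2f\equiv u$ forces the same for $g_2,h_2$; this leaves an uncancellable summand $\lambda u$ with $\lambda\neq0$ in $h$, contradicting $h\in I$. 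Making this monomial bookkeeping precise — isolating exactly which terms of $I$ can meet $T_0^{(1/y)l_0}$ and $u$ and tracking the constant terms through both substitutions — is the step I expect to be the main obstacle.

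Finally I would assemble the order statement. Fix $y\mid\mathfrak{l}_0$ and let $d$ denote the order of $[D_y]$ in $\Cl(X)$. Since $yD_y=D_1$ is principal (the case $y'=1$ above), we have $d\mid y$. If $d<y$, then $y/d>1$ divides $\mathfrak{l}_0$, and $dD_y=\sum_j\tfrac{d}{y}l_{0j}D_{0j,1}=D_{y/d}$, which is \emph{not} principal by the claim just proved; this contradicts $d[D_y]=0$. Hence $d=y$, so $\langle D_y\rangle$ is a cyclic subgroup of order $y$ in $\Cl(X)$, as asserted.
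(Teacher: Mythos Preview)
Your proposal is correct and follows essentially the same approach as the paper: the same ideal $\mathfrak{A}_y=\bangle{u,\,T_0^{(1/y)l_0}}$, the same reduction via Lemma~\ref{lem:princIdeal}, and the same contradiction argument for $y>1$ based on tracking constant terms through the two substitutions. You are more explicit than the paper in two places---the order computation establishing $D_y=\sum_Y\ord_Y(\mathfrak{A}_y)$ via $u\bar u=-T_0^{l_0}$, and the final deduction that the order of $[D_y]$ is exactly $y$ using $dD_y=D_{y/d}$---both of which the paper leaves implicit.
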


\begin{proof}[Proof of Proposition \ref{prop:zerodegree}]
We prove (i). As $c(0) = \gcd(\mathfrak{l}_1, \mathfrak{l_2}) = 1$ holds the vanishing set
$\V(X; T_{0j})$ is a prime divisor for $j=1, \dots, n_0$. 
In particular, as $R(A,P_0)$ is $K_0$-factorial, $K_0$-primality of the variable $T_{0j}$ implies
that $D_{0j,1}$ is a principal divisor for $j = 1, \ldots, n_0$
due to \cite[Prop. 1.5.3.3]{ArDeHaLa}. 
We conclude
$$\deg(T_{0,1}^{l_{0,1}})\ =\ \sum_{j = 1}^{n_0} l_{0j,1} [D_{0j,1}]\ =\  [0] \ \in\  \Cl(X).$$
This proves the assertion as all defining relations have the same degree. 

We turn to (ii).
Due to Proposition \ref{prop:coxRAP0} we have
$$\deg(T_{0,1}^{l_{0,1}})\ =\ \sum_{j = 0}^{n_0} l_{0j,1}[D_{0j,1}]\ =\  \sum_{j = 0}^{n_0} \frac{1}{2}l_{0j}[D_{0j,1}] \ \in\  \Cl(X).$$
Thus, applying Lemma \ref{lem:principal} and using the fact
that all defining relations have the same degree, 
the assertion follows.
\end{proof}

The following Construction shows how to use the  above Proposition in order to realize $\Cl(X)$  as a factor group of a finitely generated group:

\begin{construction}\label{constr:ClRel}
Let $X := \Spec \,  R(A,P_0)$ be a rational trinomial variety with total coordinate space $\Spec \, R(A', P_0')$. Then the divisor class group of $X$ is generated by the $\Cl(X)$-degrees of the variables $T_{ij,t}$ and $S_k'$.
Consider the map $\pi$ sending each Weil divisor $D$ to its class $[D] \in \Cl(X)$.
Then, as $\deg(S_k') = [0] \in \Cl(X)$ holds, the subgroup
$$
\mathfrak{D}\ :=\ \bangle{D_{ij,t}; \ 0 \leq i \leq r, \ 1 \leq j \leq n_i, 1\leq t \leq c(i)} \ \subseteq\  \mathrm{WDiv}(X),
$$
which is isomorphic to $\ZZ^{n'}$, is mapped surjectively onto $\Cl(X)$ via $\pi$.
Now let $(D_\alpha)_\alpha$ be any finite collection of principal Weil divisors inside $\mathfrak{D}$, 
i.e. we have
$$[D_\alpha]\ =\ \sum a^\alpha_{ij,t} [D_{ij,t}]\ =\ [0] \ \in\  \Cl(X).$$
Then writing the coefficients $a^\alpha_{ij,t}$ as rows into an $\alpha \times n'$ matrix $P$
we obtain a commutative diagram
$$ 
\xymatrix{
\ZZ^{n'}\ar[rr]^{e_{ij,t} \mapsto [D_{ij,t}]}
\ar[rd]
&
&
\Cl(X)
\\
&
\ZZ^{n'}/\im(P^*). \ar[ur]
&
}
$$
In particular $\Cl(X)$ is isomorphic to a factor group of $\ZZ^{n'}/ \im(P^*)$.
\end{construction}

\begin{lemma}\label{lemma:matrix}
Let $l_i \in \ZZ_{>0}^{n_i}$ be any tuple, $k \in \ZZ_{\geq1}$ an integer, denote by $E_{n_i}$ the unit matrix of size $n_i$ and set
$$
A(k, l_i)\ :=\ 
\left[
\begin{array}{ccc}
l_i
& 
\dots
& 
0
\\
\vdots
& 
\ddots 
& 
\vdots
\\
0 
& 
\dots
&
l_i
\\
E_{n_i} 
&
\dots
&
E_{n_i}
\end{array}
\right]
\ \in \ \Mat(k+n_i,k\cdot n_i, \ZZ).
$$
Then $A(k, l_i)$ has rank $n_i -1 + k$ and the $(n_i-1+k)$-th determinantal
divisor divides $\mathfrak{l}_i^{k-1}$, where $\mathfrak{l}_i := \gcd(l_{i1}, \dots,  l_{in_i})$.
\begin{proof}
Choose for any $2 \leq t \leq k$ an integer $1 \leq j_t \leq n_i$ and
denote by $e_{j_t}$ the column vector having $1$ as $j_t$-th entry and 
all other entries equal zero. 
Consider the following $(n_i-1+k)\times (n_i-1+k)$ square matrix 
obtained by deleting the first row and several of the last $(k-1)\cdot n_i$ columns 
of $A(k,l_i)$
$$
\left[
\begin{array}{cccccc}
0 & \dots & 0
&
l_{ij_2}
& 
\dots
& 
0
\\
\vdots & & \vdots
&
\vdots
& 
\ddots 
& 
\vdots
\\
0 & \hdots & 0
&
0 
& 
\dots
&
l_{ij_k}
\\
 & E_{n_i} & 
&
e_{j_2}
&
\dots
&
e_{j_k}
\end{array}
\right].
$$
The determinant of this matrix equals the product
$l_{ij_2} \cdots l_{ij_k}$ up to sign. 
With ${\mathfrak{l}_i = \gcd (l_{i1}, \dots, l_{in_i})}$ we obtain
$$\gcd( \prod_{t=2}^k l_{ij_t}; \  j_t \in \left\{1, \dots, n_i\right\})\ =\  \mathfrak{l}_i^{k-1}.$$
This shows that the $(n_i-1+k)$-th determinantal divisor divides $\mathfrak{l}_i^{k-1}$.
Moreover, as $A(k,l_i)$ is obviously not of full rank this proves the assertions.
\end{proof}
\end{lemma}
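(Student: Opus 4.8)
The plan is to exploit the evident block structure of $A(k,l_i)$: its top $k \times kn_i$ part is block diagonal with the row $l_i$ placed in each diagonal block, while its bottom $n_i \times kn_i$ part consists of $k$ copies of $E_{n_i}$ placed side by side. In the standard basis $u_1,\dots,u_k,v_1,\dots,v_{n_i}$ of the ambient row space, the column indexed by a block $t$ and a position $j$ is exactly $l_{ij}u_t+v_j$; this reformulation makes both the rank and the minors transparent.

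First I would pin down the rank from above. Searching for a left dependency $wA(k,l_i)=0$, I write $w=(c_1,\dots,c_k,d_1,\dots,d_{n_i})$, and the column equations read $c_t l_{ij}+d_j=0$ for all $t,j$. Since every $l_{ij}>0$, subtracting the equations for two values of $t$ forces all $c_t$ to coincide, and then $d_j=-c_1 l_{ij}$. Hence the left kernel is exactly one-dimensional, spanned by $(1,\dots,1,-l_{i1},\dots,-l_{in_i})$, so $A(k,l_i)$ is not of full row rank and $\mathrm{rk}\,A(k,l_i)\le k+n_i-1$. The matching lower bound will fall out of the next step.

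For the determinantal divisor I would single out a convenient family of maximal ($(k+n_i-1)\times(k+n_i-1)$) square submatrices: delete the first top row, keep all $n_i$ columns of the first block, and from each remaining block $t=2,\dots,k$ keep exactly one column $j_t\in\{1,\dots,n_i\}$. The resulting submatrix has the shape $\left[\begin{smallmatrix}0&D\\ E_{n_i}&B\end{smallmatrix}\right]$ with $D=\mathrm{diag}(l_{ij_2},\dots,l_{ij_k})$ and $B=[e_{j_2},\dots,e_{j_k}]$. Using the $E_{n_i}$ block to clear $B$ by column operations turns it into a block anti-diagonal matrix, whose determinant is $\pm\,l_{ij_2}\cdots l_{ij_k}$. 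For admissible choices of the $j_t$ these minors are nonzero, giving $\mathrm{rk}\,A(k,l_i)\ge k+n_i-1$ and hence equality with the bound above.

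It remains to read off what these minors contribute to the $(k+n_i-1)$-th determinantal divisor, i.e. the gcd of all maximal minors. Since this divisor divides each minor, it divides the gcd of our particular family, so I would compute $\gcd\{\,l_{ij_2}\cdots l_{ij_k}:j_2,\dots,j_k\in\{1,\dots,n_i\}\,\}$. A prime-by-prime valuation argument settles this cleanly: for a prime $p$, the minimum of $\sum_{t=2}^{k}v_p(l_{ij_t})$ over independent choices of the $j_t$ equals $(k-1)\min_j v_p(l_{ij})=(k-1)\,v_p(\mathfrak{l}_i)$, so the gcd is $\mathfrak{l}_i^{k-1}$, yielding the asserted divisibility. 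The only genuinely delicate point is this last gcd computation: one must use that the block indices $j_2,\dots,j_k$ vary \emph{independently}, so that the valuation-minimizing index can be chosen separately in each factor. The block-determinant bookkeeping and the rank bounds are routine by comparison.
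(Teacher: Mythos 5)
Your proposal is correct and follows essentially the same route as the paper's proof: you select the identical family of maximal square submatrices (delete the first top row, keep the full first block and one column $j_t$ from each remaining block), evaluate their determinants as $\pm\,l_{ij_2}\cdots l_{ij_k}$, and take the gcd over all independent choices to get $\mathfrak{l}_i^{k-1}$. Your additions --- the explicit one-dimensional left kernel for the rank bound and the prime-valuation computation of the gcd --- merely flesh out steps the paper treats as obvious ("obviously not of full rank" and the asserted gcd identity).
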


\begin{proof}[Proof of Theorem \ref{thm:divclgr}]
Using the rationality and the factoriality criterion as stated in Remark \ref{rem:ratcrit} only in the cases (ii) and (iii) there is something to show.

We prove Case (ii).
Let $D_{ij,1} \cup \dots \cup D_{ij,c(i)}$ be the decomposition of $\V(X,T_{ij})$ into prime divisors. 
As $R(A,P_0)$ is $K_0$-factorial and $T_{ij}$ is $K_0$-prime, we can apply
\cite[Prop. 1.5.3.3]{ArDeHaLa} and obtain
\begin{equation}\label{equ1}
\sum_{t=1}^{c(i)} [D_{ij,t}] \ =\  [0] \ \in\  \Cl(X).
\end{equation}
Moreover, due to Proposition \ref{prop:zerodegree} (i) the defining relations of $R(A',P_0')$ have degree zero. 
In particular, due to Proposition \ref{prop::isotropy} for every $i=0, \ldots, r$ and $1 \leq t \leq c(i)$ we obtain a term 
$T_{i,t}^{l_{i,t}}= T_{i1,t}^{l_{ij,t}} \cdots T_{ij,t}^{l_{in_i,t}}$ of degree zero occurring in the relations of $R(A',P_0')$.
This gives rise to relations
\begin{equation}\label{equ2}
\sum_{j=1}^{n_i} l_{ij,t} [D_{ij,t}] \ =\  [0] \ \in\  \Cl(X),
\end{equation}
where $i = 0, \dots, r$ and $t=1, \dots, c(i)$.
As $l_{i, 1}= \dots = l_{i,c(i)}$ holds for any $i=0, \dots, r$, 
the relations (\ref{equ1}) and (\ref{equ2}) give rise to 
block matrices $A(c(i), l_{i,1})$ in a matrix $P$ as in Construction \ref{constr:ClRel}.
In particular we get an $m' \times n'$ matrix with 
$m':=\sum_{i=0}^r (n_i+c(i))$ and $n':= \sum_{i=0}^r c(i)\cdot n_i$ 
of the following form
\begin{equation}\label{equ:P}
P\ :=\ 
\left[
\begin{array}{cccc}
 A(c(0), l_{0,1})&0& \cdots &0
 \\
 0 &  A(c(1), l_{1,1}) & \cdots & 0
\\
 \vdots & \vdots & \ddots & \vdots
 \\
 0 & 0 & \hdots &  A(c(r), l_{r,1})
\end{array}
\right].
\end{equation}
Note that $P$ is of rank $\sum_{i=0}^r (n_i -1+c(i))$
and the 
$\mathrm{rk}(P)$-th determinantal divisor of $P$ equals the product of the 
$(n_i-1+c(i))$-th determinantal divisors of the block matrices $A(c(i),l_{i,1})$. 
Moreover, $c(0) = c(1) = 1$ and $c:=c(i) = \gcd(\mathfrak{l}_0, \mathfrak{l}_1)$ holds and due Proposition \ref{prop:coxRAP0} we have $l_{i,1} = \ldots =  l_{i,c} =  l_i$ for $i \geq 2$.
Therefore, applying Lemma \ref{lemma:matrix} we conclude 
that the divisor class group of $X$ is isomorphic to a factor group 
of the group
\begin{equation}\label{equ4}
\ZZ^{n'}/\im(P^*) \ \cong\  \ZZ^{n'-\mathrm{rk}(P)} \times G,
\end{equation}
where $G$ is a finite abelian group of order $k$ with
$k\mid(\mathfrak{l}_{2}^{c-1} \cdots \mathfrak{l}_{r}^{c-1})$.
As $\mathrm{rk}(\Cl(X)) = n' - \mathrm{rk}(P)$
holds due to Proposition \ref{prop:rkClGroup} and $\Cl(X)^{\mathrm{ctors}}$
is a subgroup of $\Cl(X)$
of order $(\mathfrak{l}_{2}^{c-1} \cdots \mathfrak{l}_{r}^{c-1})$
we obtain 
$\Cl(X) \cong \ZZ^{\tilde{n}} \times \Cl(X)^{\mathrm{ctors}}$.

We prove Case (iii).
With the same arguments as in Case (ii) we get relations of the form
(\ref{equ1}). Moreover, since the degrees of the relations and thus the degree of all terms occurring
in the Cox ring $R(A',P_0')$ of 
$X$ coincide, we obtain
\begin{equation}\label{equ3}
\sum_{j=1}^{n_0} l_{0j,1} [D_{0j,1}]  \ =\ \sum_{j=1}^{n_i} l_{ij,t} [D_{ij,t}] \ \in\  \Cl(X),
\end{equation}
where $i = 0, \dots, r$ and $t = 1, \ldots, c(i)$. 
Suitably ordered, the relations of the form (\ref{equ1}) and (\ref{equ3}) give rise to a matrix as in Construction 
\ref{constr:ClRel}
\begin{equation}\label{equ:P2}
P
\
:=
\
\left[
\begin{array}{cc|ccc}
-l_{0,1} &l_{0,2} & 0 &\cdots& 0
\\ 
E_{n_0}&E_{n_0} &0& \cdots& 0
\\
\hline
 * & 0 & A(c(1), l_{1,1})& \cdots &0
 \\
 \vdots &\vdots& \vdots & \ddots & \vdots
 \\
* & 0 & 0 & \hdots &  A(c(r), l_{r,1})
\end{array}
\right],
\end{equation}
where we use $c(0)=2$
and the 
$*$ 
indicates that there might be some non-zero entries.
By suitably swapping columns, applying elementary row operations 
and using $l_{0,1} = l_{0,2}$ one achieves
a matrix
$$
P'
\
:=
\
\left[
\begin{array}{c|ccc|c}
-2l_{0,1} & 0&  \multicolumn{2}{c}{\cdots} &0
\\ 
\cline{1-4}
 * & A(c(1), l_{1,1})& \cdots &0 & 0
 \\
 \vdots & \vdots & \ddots & \vdots & \vdots
 \\
 * & 0 & \hdots &  A(c(r), l_{r,1}) & 0
\\
\cline{2-5}
 \multicolumn{1}{c}{E_{n_0}}&0 &\hdots &0 & E_{n_0}
\end{array}
\right].
$$
The rank of $P'$ equals $\sum_{i=0}^r(n_i-1+c(i))$. 
Using
$l_{i,1} = l_{i,2} = l_{i}/2$ for $i = 0,1,2$, we obtain
with Lemma \ref{lemma:matrix}
that the $(n_i-1+c(i))$-th determinantal divisors of $A(c(i),l_{i,1})$ divides 
 $\mathfrak{l}_i/2$ for $i = 1,2$.
Moreover with $c(i) = 4$ and
$l_{i,1} = \ldots = l_{i,4} = l_{i}$ for $i \geq 3$ we obtain 
that the $(n_i-1+c(i))$-th determinantal divisors of $A(c(i),l_{i,1})$ divides 
$\mathfrak{l}_i^3$ for $i \geq 3$.
Thus considering the maximal square submatrices just including one of the first $n_0$ columns,
Laplace expansion with respect to the first row shows that the $\mathrm{rk}(P')$-th determinantal divisor of 
$P'$ divides $\mathfrak{l}_0(\mathfrak{l}_1/2)(\mathfrak{l}_2/2)\mathfrak{l}_3^{3}\ldots\mathfrak{l}_r^{3}$.
Thus $\Cl(X)$ is a factor group of 
$$
\ZZ^{n'}/\mathrm{im}(P^*)\ \cong\ \ZZ^{n'- \mathrm{rk}(P')} \times G,
$$
where $G$ is a finite group of order
$k$ with $k \mid(\mathfrak{l}_0(\mathfrak{l}_1/2)(\mathfrak{l}_2/2)\mathfrak{l}_3^{3}\ldots\mathfrak{l}_r^{3})$. 
Due to Proposition \ref{prop:rkClGroup} 
the rank of the divisor class group equals
$n' - \mathrm{rk}(P')$. Using Proposition \ref{prop:zerodegree} (ii) and the fact that $\Cl(X)^{\mathrm{ctors}}$ does not contain an element of order $2$ due to 
Lemma \ref{lem:comptors}, we conclude that 
$\ZZ/2\ZZ \times \Cl(X)^{\mathrm{ctors}}$ is a subgroup of $G$ of order $\mathfrak{l}_0(\mathfrak{l}_1/2)(\mathfrak{l}_2/2)\mathfrak{l}_3^{3}\ldots\mathfrak{l}_r^{3}$
and thus equality holds. 
\end{proof}

\section{Applications}\label{sec:app}
This section is dedicated to applications of Theorem \ref{thm:divclgr} and is divided into three parts. In the first part we give explicit criteria for the divisor class group of a trinomial variety to be free finitely generated, finite or cyclic and compare our results with existing work in this direction. 
In the second part 
we turn to the iteration of Cox rings for varieties with torus action of complexity one and prove our second main Theorem \ref{thm:link2surf}.
Finally, in Corollary \ref{cor:ClType1} we use our Theorem \ref{thm:divclgr} to compute the divisor class groups of a class of varieties that arises side by side with trinomial varieties as total coordinate spaces of rational varieties with torus action of complexity one.

\begin{corollary}\label{cor:freeab}
Let $X$ be a rational trinomial variety. Then the divisor class group of $X$ is free abelian 
if and only if $X$ is factorial or
after reordering decreasingly we have
$\mathfrak{l}_0 \geq \mathfrak{l}_1 \geq \mathfrak{l}_2 = \ldots  =  \mathfrak{l}_r =1$.
\begin{proof}
Assume that the divisor class group of $X$ is free abelian.
Then either $X$ is factorial and thus $\Cl(X) = \left\{0\right\}$ holds or we may apply Theorem \ref{thm:divclgr}
and conclude $\gcd( \mathfrak{l}_0, \mathfrak{l}_1) > 1$
and
$ \mathfrak{l}_2 = \ldots  =  \mathfrak{l}_r =1$ holds.
The other direction is a direct consequence of Theorem~\ref{thm:divclgr}.
\end{proof}
\end{corollary}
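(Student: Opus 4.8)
The plan is to read the conclusion directly off the structure theorem, Theorem~\ref{thm:divclgr}. Since $X$ is rational, the rationality criterion of Remark~\ref{rem:ratcrit} places $X$ in exactly one of the situations~(i), (ii), (iii) of that theorem, and in each of these we already have an explicit presentation of $\Cl(X)$ as a product of a free part $\ZZ^{\tilde n}$ with an explicit torsion part. Freeness of $\Cl(X)$ is therefore equivalent to vanishing of that torsion part, and the whole proof reduces to deciding, case by case, when this happens.

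For the forward implication I would assume $\Cl(X)$ free abelian and run through the three cases. Situation~(i) is precisely factoriality and gives $\Cl(X)=\{0\}$, one branch of the asserted disjunction. Situation~(iii) can be ruled out altogether: there the torsion contains the cyclic factor $\ZZ/(\mathfrak{l}_0\mathfrak{l}_1\mathfrak{l}_2/4)\ZZ$, and the hypotheses $\gcd(\mathfrak{l}_0,\mathfrak{l}_1)=\gcd(\mathfrak{l}_0,\mathfrak{l}_2)=\gcd(\mathfrak{l}_1,\mathfrak{l}_2)=2$ force each $\mathfrak{l}_i\ge 2$, so this order is at least $2$, contradicting freeness. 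In situation~(ii), with $c:=\gcd(\mathfrak{l}_0,\mathfrak{l}_1)>1$, the torsion reads $(\ZZ/\mathfrak{l}_2\ZZ)^{c-1}\times\cdots\times(\ZZ/\mathfrak{l}_r\ZZ)^{c-1}$; since $c-1\ge 1$, this is trivial if and only if $\mathfrak{l}_2=\cdots=\mathfrak{l}_r=1$, and the decreasing order then yields $\mathfrak{l}_0\ge\mathfrak{l}_1\ge\mathfrak{l}_2=\cdots=\mathfrak{l}_r=1$.

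For the converse, factoriality directly gives $\Cl(X)=\{0\}$, which is free. If instead $\mathfrak{l}_0\ge\mathfrak{l}_1\ge\mathfrak{l}_2=\cdots=\mathfrak{l}_r=1$, then every pairwise gcd involving an index $\ge 2$ equals $1$, so $X$ is either factorial (when $\gcd(\mathfrak{l}_0,\mathfrak{l}_1)=1$) or lies in situation~(ii); in the latter case the explicit formula collapses to $\ZZ^{\tilde n}$ because all cyclic factors $\ZZ/\mathfrak{l}_i\ZZ$ with $i\ge 2$ are trivial. Either way $\Cl(X)$ is free abelian, completing the equivalence.

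I do not expect a genuine obstacle here, as the content is entirely packaged in Theorem~\ref{thm:divclgr}; the only point needing a moment's care is the exclusion of situation~(iii), where one must observe that the lower bounds $\mathfrak{l}_i\ge 2$ forced by the gcd hypotheses make the cyclic order $\mathfrak{l}_0\mathfrak{l}_1\mathfrak{l}_2/4$ strictly exceed~$1$. A secondary subtlety is that the two alternatives in the statement overlap when $\gcd(\mathfrak{l}_0,\mathfrak{l}_1)=1$, so I would phrase the converse so that the boundary case $\mathfrak{l}_1=1$ is cleanly absorbed into factoriality rather than treated separately.
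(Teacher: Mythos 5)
Your proposal is correct and follows essentially the same route as the paper: both read the result off Theorem~\ref{thm:divclgr} case by case, characterizing freeness as vanishing of the explicit torsion part. Your write-up merely makes explicit what the paper leaves implicit, namely that situation~(iii) is excluded because the gcd hypotheses force $\mathfrak{l}_0\mathfrak{l}_1\mathfrak{l}_2/4 \geq 2$.
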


As an application, we consider trinomial
varieties with an isolated singularity;
recall that \cite[Thm. 6.5]{LiSu} gives a complete description of all those with trivial divisor class group.

\begin{corollary}\label{cor:isosing}
Let $X$ be a trinomial variety with an isolated singularity. Then $\dim(X) \leq 5$ holds and we are in one of
the following cases:
\begin{enumerate}
\item 
If $\dim(X) =2$ holds and $X$ is rational then its divisor class group is a torsion group.
\item 
If $\dim(X) =3$ holds then $X$ is rational 
and its divisor class group is free abelian.
\item
If $\dim(X) \geq 4$ holds then $X$ is factorial.
\end{enumerate}
\begin{proof}
Assume $X$ is two-dimensional.
Then $n_i = 1$ holds for
all $i=0, \ldots, r$ and $X$ has an isolated singularity at zero. Thus if $X$ is rational, Theorem \ref{thm:divclgr} implies that its divisor class group is a torsion~group. 

Assume $\dim(X)\geq 3$ holds. Then,
considering the Jacobian of $X$, 
we conclude that $X$ has an isolated singularity at zero 
if and only if 
$X$ is a hypersurface with defining relation
$g= T_0^{l_0}+ T_1^{l_1} + T_2^{l_2}$,
where $1 \leq n_0 \leq n_1\leq n_2 = 2$ and 
$l_{ij} =1$ whenever $n_i =2$, see also \cite{LiSu}.
In particular $\dim(X) \leq 5$ holds and $X$ is rational due to Remark \ref{rem:ratcrit}.
If $n_0 = n_1 = 1$ holds, i.e. $X$ is of dimension three,
we obtain
$\mathfrak{l}_0, \mathfrak{l}_1 \geq \mathfrak{l}_2= 1$.
Applying Corollary \ref{cor:freeab}
we conclude that $X$ is free abelian.
In the case that $n_0 \leq n_1 =n_2 = 2$ holds, i.e. 
$\dim(X) \geq 4$ holds,
we obtain $ \mathfrak{l}_1 = \mathfrak{l}_2 =1$
and thus $X$ is factorial due to Remark~\ref{rem:ratcrit}.
\end{proof}
\end{corollary}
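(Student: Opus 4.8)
The plan is to determine the singular locus by a Jacobian analysis, pin down the shape of $X$, and then read off $\Cl(X)$ from Theorem~\ref{thm:divclgr} and Corollary~\ref{cor:freeab}. I may assume $X=\Spec\,R(A,P_0)$ is adjusted, since adjusting alters neither the dimension, the isomorphy type, nor the existence of an isolated singularity. Write $M_i:=T_i^{l_i}$, so that each relation equals $g_\alpha=\lambda_\alpha M_\alpha+\mu_\alpha M_{\alpha+1}+\nu_\alpha M_{\alpha+2}$ with all coefficients nonzero (they are $2\times2$ minors of $A$). As $X$ is a complete intersection of codimension $r-1$, one has $\dim X=n+m-r+1$; since every $n_i\ge1$ and $m\ge0$, the equality $\dim X=2$ forces $n_i=1$ for all $i$ and $m=0$. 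Such an $X$ is a normal surface, hence has only isolated singularities, and if it is rational then Theorem~\ref{thm:divclgr} gives $\Cl(X)$ of rank $\tilde n=\sum_i((c(i)-1)-c(i)+1)=0$, so $\Cl(X)$ is a torsion group; this is~(i).

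Now suppose $\dim X\ge3$. The adjusted hypothesis $n_il_{ij}>1$ makes every $M_i$, and thus every $g_\alpha$, a sum of monomials of degree $\ge2$, so the Jacobian $J=(\partial g_\alpha/\partial T_{ij})$ vanishes at the origin; since $r\ge2$, the origin is always a singular point of $X$. Giving $T_{ij}$ the positive weight $1/(n_il_{ij})$ renders every $M_i$ homogeneous of weight one, so $X$ is a cone and $\mathrm{Sing}(X)$ is conical; therefore ``isolated singularity'' is equivalent to $\mathrm{Sing}(X)=\{0\}$, and it suffices to rule out nonzero singular points. If $m\ge1$ then $X\cong X'\times\CC^m$ with $X'$ defined by the same relations, and $\{0\}\times\CC^m\subseteq\mathrm{Sing}(X)$ is positive dimensional; hence $m=0$. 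Because $\dim X\ge3$ then forces $\sum_i n_i\ge r+2$, at least one block has size $n_{i_0}\ge2$.

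The crux is the following characterization, which I would isolate as a lemma: here $X$ has an isolated singularity if and only if $r=2$ and, after ordering $n_0\le n_1\le n_2$, one has $n_2=2$ with $l_{ij}=1$ on every block of size $2$. The main structural input is that, for fixed $i$, all $n_i$ columns of $J$ in block $i$ are scalar multiples of a single vector $v_i\in\CC^{r-1}$, so block $i$ contributes at most rank one and $\mathrm{rank}\,J(p)\le\#\{i:\nabla M_i(p)\ne0\}$. If $r\ge3$, pick $p$ whose block-$i_0$ coordinates lie in $\V(M_{i_0})\setminus\{0\}$ (possible as $n_{i_0}\ge2$) and whose other blocks vanish; then $p\in X\setminus\{0\}$ and only block $i_0$ is active, so $\mathrm{rank}\,J(p)\le1<r-1$ and $p$ is singular, contradicting isolatedness. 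Hence $r=2$ and $X=\V(\lambda_0M_0+\mu_0M_1+\nu_0M_2)$ is a hypersurface. For such $X$ one has $\mathrm{Sing}(X)=\V(g)\cap\bigcap_{i,j}\V(\partial M_i/\partial T_{ij})=\prod_{i}Z_i$ with $Z_i:=\V(\nabla M_i)\subseteq\CC^{n_i}$, where I use that $\nabla M_i(q)=0$ forces $M_i(q)=0$, so that $g$ vanishes identically on $\prod_iZ_i$. A short computation gives $Z_i=\{0\}$ exactly when $n_i=1$, or $n_i=2$ with both exponents equal to $1$, and $\dim Z_i>0$ otherwise (a size-$\ge3$ block, or a size-$2$ block carrying an exponent $\ge2$). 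Thus $\mathrm{Sing}(X)=\{0\}$ forces every block to have size $\le2$ with exponents $1$ on the size-$2$ blocks; combined with the existence of a size-$\ge2$ block this yields the asserted form. I expect this singular-locus bookkeeping to be the main obstacle, although the factorization $\mathrm{Sing}(X)=\prod_iZ_i$ makes it manageable (compare~\cite{LiSu}).

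Granting the lemma, the conclusions follow quickly. A hypersurface of the stated form has $\dim X=n_0+n_1+n_2-1\in\{3,4,5\}$, giving the bound $\dim X\le5$. If $\dim X=3$ then $(n_0,n_1,n_2)=(1,1,2)$, so $\mathfrak{l}_2=\gcd(1,1)=1$ while $\mathfrak{l}_0,\mathfrak{l}_1\ge2$; by Remark~\ref{rem:ratcrit} $X$ is rational, and since $\mathfrak{l}_2=\dots=\mathfrak{l}_r=1$ after ordering, Corollary~\ref{cor:freeab} shows that $\Cl(X)$ is free abelian, which is~(ii). If $\dim X\ge4$ then $n_1=n_2=2$, hence $\mathfrak{l}_1=\mathfrak{l}_2=1$ and therefore $\gcd(\mathfrak{l}_i,\mathfrak{l}_j)=1$ for all $i<j$; by Remark~\ref{rem:ratcrit}(i) the ring $R(A,P_0)$ is factorial, which is~(iii).
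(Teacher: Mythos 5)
Your proposal is correct and follows essentially the same route as the paper: a Jacobian analysis showing that an isolated singularity in dimension $\geq 3$ forces $X$ to be a hypersurface $\V(T_0^{l_0}+T_1^{l_1}+T_2^{l_2})$ with $1\le n_0\le n_1\le n_2=2$ and $l_{ij}=1$ on size-two blocks, followed by the dimension count and the applications of Remark~\ref{rem:ratcrit}, Theorem~\ref{thm:divclgr} and Corollary~\ref{cor:freeab}. The only difference is one of detail: the paper merely asserts the Jacobian characterization (citing \cite{LiSu}), whereas you prove it, via the rank-one block structure of $J$ to eliminate $r\ge 3$, the factorization $\mathrm{Sing}(X)=\prod_i Z_i$, and the explicit treatment of the free variables $S_k$ --- all of which checks out.
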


\begin{remark}\label{rem:disc}
As stated in the introduction, Flenner shows in \cite{Fl} 
that rational
three-dimensional 
quasihomogeneous complete intersections
over algebraically closed fields of arbitrary characteristic
with an isolated singularity 
have a free abelian divisor class group.
Corollary \ref{cor:isosing} shows that this is as well true for all trinomial varieties with isolated singularity of dimension at least three. 

Using Corollary \ref{cor:freeab} one can construct 
examples of
affine, rational, trinomial varieties $X$ with free abelian divisor class group having a higher dimensional singular locus: The three-dimensional variety
$$\V(T_{01}^4 + T_{11}^2+ T_{21}^3T_{22}^2)\ \subseteq\  \CC^4$$
has divisor class group $\ZZ$ and a one-dimensional singular locus. Note that not any three-dimensional trinomial variety has a free abelian divisor class group as for instance,
we obtain divisor class group 
$\ZZ \times \ZZ/3\ZZ$ for the hypersurface
$$\V(T_{01}^4 + T_{11}^2+ T_{21}^3T_{22}^3)\ \subseteq\  \CC^4.$$
\end{remark}

\begin{corollary}\label{cor:finclgr}
Let $X$ be a rational trinomial variety. Then the divisor class group of $X$ is finite if and only if 
$X$ is factorial or $n_i = 1$ holds for all $0 \leq i \leq r$. 
\begin{proof}
If $X$ is factorial, then $\Cl(X) = \left\{0 \right\}$ holds. Else we may apply
Theorem \ref{thm:divclgr} and have to evaluate
$0 = \tilde{n} = \sum_{i=0}^{r} ((c_i -1)n_i - c_i +1)$.
This holds if and only if $c_i =1$ holds for all $0 \leq i \leq r$ or $n_i=1$ holds for all $0 \leq i \leq r$. 
The first case is once more the factorial case and the assertion follows.
\end{proof}
\end{corollary}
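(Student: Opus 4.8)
The plan is to read finiteness directly off the structural description in Theorem~\ref{thm:divclgr}, thereby reducing the whole question to the vanishing of the free rank. First I would dispose of the factorial case: if $X$ is factorial then $\Cl(X) = \{0\}$ by assertion~(i), which is finite, so one half of the alternative ``$X$ factorial or \ldots'' is immediate. For the remaining rational $X$ we are, by Remark~\ref{rem:ratcrit}, in Situation~(ii) or~(iii) of Theorem~\ref{thm:divclgr}, where the theorem presents $\Cl(X)$ as a product of finite cyclic groups with a free summand $\ZZ^{\tilde n}$. Hence such a $\Cl(X)$ is finite if and only if $\tilde n = 0$, and the problem becomes a purely combinatorial statement about the integer $\tilde n = \sum_{i=0}^r ((c(i)-1)n_i - c(i)+1)$.

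The key simplification is to rewrite each summand as a product, namely $(c(i)-1)n_i - c(i) + 1 = (c(i)-1)(n_i-1)$, so that
\[
\tilde n \ = \ \sum_{i=0}^r (c(i)-1)(n_i-1).
\]
Since $c(i) \ge 1$ and $n_i \ge 1$ for every index $i$, all summands are non-negative; therefore $\tilde n = 0$ holds if and only if $(c(i)-1)(n_i-1)=0$ for each $i$ separately, that is, if and only if $c(i)=1$ or $n_i=1$ for every $i$. This termwise criterion is what I would then match against the dichotomy recorded in the corollary.

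To finish I would feed in the explicit multiplicities $c(i)$ from Proposition~\ref{prop:coxRAP0}. The observation I want is that, for a rational $X$, having $c(i)=1$ for \emph{all} $i$ is equivalent to factoriality: since $c(2)=\gcd(\mathfrak{l}_0,\mathfrak{l}_1)$ and this gcd exceeds~$1$ in both Situation~(ii) and Situation~(iii), the equality $c(2)=1$ already forces Situation~(i) via Remark~\ref{rem:ratcrit}. Thus in the non-factorial rational cases at least one $c(i)$ exceeds~$1$, and the task reduces to showing that the termwise condition ``$c(i)=1$ or $n_i=1$'' is in this regime equivalent to ``$n_i=1$ for all $i$''. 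I expect this last step to be the main obstacle: it requires determining, from the gcd patterns of Situations~(ii) and~(iii), exactly which of the $c(i)$ equal~$1$ and which exceed it, and then confirming that, once finiteness is imposed, the indices carrying $c(i)=1$ do not leave room for any $n_i>1$. The converse directions cause no trouble, as both factoriality and the condition $n_i=1$ for all $i$ yield $\tilde n = 0$ and hence a finite group directly from the product description in Theorem~\ref{thm:divclgr}.
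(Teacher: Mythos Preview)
Your approach coincides with the paper's: reduce finiteness to $\tilde n = 0$ via Theorem~\ref{thm:divclgr} and then analyse that sum. Your factorisation $(c(i)-1)n_i - c(i)+1 = (c(i)-1)(n_i-1)$ makes explicit what the paper leaves implicit, and it correctly shows that $\tilde n = 0$ is equivalent to the \emph{termwise} condition ``$c(i)=1$ or $n_i=1$ for each~$i$''. The paper's proof then asserts directly that this is equivalent to the global dichotomy ``$c(i)=1$ for all $i$, or $n_i=1$ for all $i$''; this is exactly the step you flag as the obstacle.

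Your caution here is warranted: that step does not go through as the corollary is stated. In Situation~(ii) one has $c(0)=c(1)=1$ while $c(i)=c>1$ for all $i\ge 2$, so the termwise condition only forces $n_i=1$ for $i\ge 2$ and leaves $n_0,n_1$ unconstrained. Concretely, $X = \V(T_{01}^2 T_{02}^2 + T_{11}^2 + T_{21}^3)$ is adjusted, rational and non-factorial with $n_0=2$, yet Theorem~\ref{thm:divclgr}(ii) gives $\Cl(X)\cong\ZZ/3\ZZ$, which is finite. (In Situation~(iii) every $c(i)\ge 2$, so there the implication does hold.) What your argument---and the paper's---actually establishes is that $\Cl(X)$ is finite if and only if $X$ is factorial or $n_i=1$ for every index $i$ with $c(i)>1$; you cannot close the gap you identified and reach ``$n_i=1$ for all $i$'' without some additional input, because the stated equivalence is false as written.
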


\begin{corollary}\label{cor:cyclic}
Let $X$ be an adjusted, rational trinomial variety. Then the divisor class group of $X$ is 
a non-trivial finite cyclic group if and only if $r=2$ holds, we have $n_0 = n_1 = n_2 = 1$ and the tuple of exponents $(l_0, l_1, l_2)$ fulfills one of the following conditions:
\begin{enumerate}
\item We have $(l_0, l_1, l_2) = (2x, 2y, z)$ with
pairwise coprime integers $x,y,z$ and $z \geq 3$ is odd.
In this situation we have $\Cl(X) \cong \ZZ/l_2\ZZ$.
\item We have $\gcd(l_0, l_1) = \gcd(l_1, l_2) = \gcd(l_0, l_2) =2$. 
In this situation we have $\Cl(X) \cong \ZZ/((l_0l_1l_2)/4)\ZZ.$
\end{enumerate}
\begin{proof}
As $X$ has non-trivial finite divisor class group we have $n_i = 1$ for all
$0 \leq i \leq r$ due to Corollary \ref{cor:finclgr}. 
Applying Theorem \ref{thm:divclgr} we conclude that the exponents $(l_0, \ldots, l_r)$ are in one of the following two forms: We have
$\gcd(l_0, l_1) =2$ and $\gcd(l_i, l_j) =1$ whenever $j \notin \left\{0,1\right\}$, and there is at most one more exponent $l_i >1$. In this case, as $X$ is adjusted, we conclude $r=2$ and obtain (i).
Else, $\gcd(l_0, l_2) = \gcd(l_1, l_2)= \gcd(l_0,l_2) =2$ holds and we have $l_i =1$ for all $i \geq 3$. As $X$ is adjusted we conclude $r=2$ and obtain Case (ii).
\end{proof}
\end{corollary}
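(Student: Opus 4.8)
The plan is to read off $\Cl(X)$ from Theorem~\ref{thm:divclgr} and then isolate, case by case, exactly when the group is finite, cyclic and non-trivial. Since a non-trivial finite abelian group is in particular non-zero, $X$ cannot be factorial, so Corollary~\ref{cor:finclgr} forces $n_i = 1$ for all $0 \le i \le r$; being adjusted with $n_i = 1$ moreover gives $l_i \ge 2$ for every $i$, and $\mathfrak{l}_i = l_i$. With all $n_i = 1$ the rank $\tilde n = \sum_i((c(i)-1)n_i - c(i)+1)$ vanishes, so $\Cl(X)$ is automatically finite; as it is non-trivial, the factorial case~(i) of Theorem~\ref{thm:divclgr} is excluded and we are in case~(ii) or case~(iii).

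First I would treat case~(ii), where Theorem~\ref{thm:divclgr} gives
$$
\Cl(X) \ \cong \ (\ZZ/l_2\ZZ)^{c-1} \times \cdots \times (\ZZ/l_r\ZZ)^{c-1}, \qquad c = \gcd(l_0,l_1) > 1,
$$
with $l_2,\dots,l_r$ pairwise coprime and each coprime to $l_0$ and $l_1$. Because the orders of distinct factors are coprime, this group is cyclic if and only if every factor $(\ZZ/l_i\ZZ)^{c-1}$ is cyclic; since some $l_i \ge 2$ by non-triviality, a factor $(\ZZ/l_i\ZZ)^{c-1}$ with $l_i \ge 2$ is cyclic precisely when $c-1 = 1$. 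Hence cyclicity forces $c = 2$, and the Chinese Remainder Theorem collapses the product to $\ZZ/(l_2\cdots l_r)\ZZ$. The remaining, and most delicate, step is to show that adjustedness leaves only one non-unit exponent beyond $l_0,l_1$, i.e. $r = 2$; granting this, $\gcd(l_0,l_1) = 2$ writes $(l_0,l_1) = (2x,2y)$ with $\gcd(x,y)=1$, coprimality of $l_2 = z$ to the even numbers $l_0,l_1$ forces $z$ odd, and $z \ge 3$ comes from $l_2 \ge 2$, so that we land in situation~(i) with $\Cl(X) \cong \ZZ/l_2\ZZ$.

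Case~(iii) is more rigid. Here Theorem~\ref{thm:divclgr} gives
$$
\Cl(X) \ \cong \ \ZZ/(l_0 l_1 l_2/4)\ZZ \times (\ZZ/l_3\ZZ)^3 \times \cdots \times (\ZZ/l_r\ZZ)^3.
$$
For $i \ge 3$ the factor $(\ZZ/l_i\ZZ)^3$ is cyclic only if $l_i = 1$; but adjustedness with $n_i = 1$ gives $l_i \ge 2$, so no such index exists and necessarily $r = 2$. The surviving factor $\ZZ/(l_0 l_1 l_2/4)\ZZ$ is cyclic, and the standing hypotheses $\gcd(l_0,l_1) = \gcd(l_1,l_2) = \gcd(l_0,l_2) = 2$ are exactly situation~(ii), so $\Cl(X) \cong \ZZ/((l_0 l_1 l_2)/4)\ZZ$. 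For the converse in both situations I would substitute the stated tuples back into Theorem~\ref{thm:divclgr} and read off that $\Cl(X)$ is the asserted non-trivial cyclic group, which closes the equivalence.

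The hard part will be the bookkeeping inside case~(ii): turning the purely group-theoretic condition ``$(\ZZ/l_i\ZZ)^{c-1}$ cyclic'' into the explicit normal form $(2x,2y,z)$, and in particular using the adjustedness ordering on the $\gcd(\mathfrak{l}_i,\mathfrak{l}_j)$ to control how many exponents exceed $1$ and thereby pin down $r = 2$. Case~(iii) and the two converse directions are, by contrast, immediate consequences of Theorem~\ref{thm:divclgr} once the reduction $n_i = 1$ is in place.
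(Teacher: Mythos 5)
Your overall route coincides with the paper's: reduce to $n_i=1$ for all $i$ via Corollary~\ref{cor:finclgr}, note that then $\tilde n=0$ so finiteness is automatic, and run through cases~(ii) and~(iii) of Theorem~\ref{thm:divclgr}. Your case~(iii) argument (each factor $(\ZZ/l_i\ZZ)^3$ with $l_i\geq 2$ is non-cyclic, and adjustedness with $n_i=1$ forces $l_i\geq 2$, so no index $i\geq 3$ exists and $r=2$) is exactly the paper's, and the two converse directions are indeed immediate substitutions. But in case~(ii) you derive only $c=\gcd(l_0,l_1)=2$ and then write ``granting this'' for the assertion $r=2$ --- the one step you yourself single out as the delicate one is never proved. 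This is a genuine gap, not a presentational shortcut: it is precisely where the content of the corollary sits. The paper's proof disposes of it by claiming that cyclicity leaves ``at most one more exponent $l_i>1$'' beyond $l_0,l_1$, whence $r=2$ by adjustedness.

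Worse, your own CRT observation shows that this missing step cannot be closed from cyclicity and adjustedness alone, so the gap is not fillable as you propose. With $c=2$ the group $\ZZ/l_2\ZZ\times\cdots\times\ZZ/l_r\ZZ$ is cyclic for \emph{every} $r$, because the $l_i$ with $i\geq 2$ are pairwise coprime under the hypotheses of Theorem~\ref{thm:divclgr}~(ii). Concretely, take $r=3$, $n_0=\dots=n_3=1$ and $(l_0,l_1,l_2,l_3)=(6,10,7,11)$: this datum is adjusted ($\gcd(\mathfrak{l}_0,\mathfrak{l}_1)=2$ dominates all other pairwise gcds, which equal $1$, and all $l_i>1$), it is rational by Remark~\ref{rem:ratcrit}~(ii), and Theorem~\ref{thm:divclgr}~(ii) yields $\Cl(X)\cong\ZZ/7\ZZ\times\ZZ/11\ZZ\cong\ZZ/77\ZZ$, a non-trivial finite cyclic group with $r\neq 2$. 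So the inference you leave open --- and equally the paper's unjustified ``at most one more exponent $l_i>1$'' --- would fail: in case~(ii) the ``only if'' direction needs input beyond the group structure delivered by Theorem~\ref{thm:divclgr}, or the $r=2$ requirement in the statement has to be weakened for that case. As it stands, your proposal correctly settles case~(iii) and both ``if'' directions, but the decisive step of case~(ii) is both unproven and, by your own CRT remark, unprovable along the announced lines.
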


\begin{example}
Applying Corollary \ref{cor:cyclic} we obtain that
the only half-factorial trinomial variety, i.e. a variety with 
divisor class group of order $2$, is the quadric $\V(T_{01}^2 + T_{11}^2 + T_{21}^2)$.
Moreover, we conclude that any rational trinomial hypersurface with 
finite but non-cyclic divisor class group 
is of the form $\V(T_{01}^{l_{01}}+T_{11}^{l_{11}} + T_{21}^{l_{21}})$
with $\gcd(l_{01}, l_{11})> 2$ and $\gcd(l_{01}, l_{21}) = \gcd(l_{11}, l_{21}) = 1$.
\end{example}

\begin{remark}\label{rem:scheja}
Let us compare our results with \cite{La, SchSt, SiSP}, where the divisor class groups of hypersurfaces of the form
$\V(z^n-g)$
with a weighted homogeneous polynomial $g$
of degree relatively prime to $n$ are treated.
In particular the divisor class groups of all trinomial hypersurfaces of the form
$\V(T_{01}^{l_{01}} + T_1^{l_1} + T_2^{l_2})\subseteq \CC^3$
with $\gcd(l_{01}, \mathfrak{l}_1) = 1 =\gcd(l_{01}, \mathfrak{l}_2)$ can be computed with the methods introduced there and are regained as part of our Theorem \ref{thm:divclgr} (i). 
Note that any rational trinomial variety fulfilling
Remark \ref{rem:ratcrit} (iii), as for example the hypersurface
$$\V(T_{01}^2T_{02}^4 + T_{11}^2 + T_{21}^2 T_{22}^6) \ \subseteq\  \CC^5$$
with divisor class group $\ZZ/2\ZZ \times \ZZ^2$,
leaves the framework of \cite{La, SchSt, SiSP}
but can be treated via Theorem \ref{thm:divclgr}.
Further examples are the two hypersurfaces given 
in Remark \ref{rem:disc}.

\end{remark}

Revisiting the proof of Theorem \ref{thm:divclgr} we obtain the following description of the 
$\Cl(X)$-grading of the Cox ring of a rational 
trinomial variety:

\begin{corollary}\label{cor:grad}
Let $X:= \Spec \, R(A,P_0)$ be an adjusted non-factorial rational trinomial variety.
Then the divisor class group grading on the Cox ring
$R(A',P_0')$ is given as
$$\deg(S_k')\ =\ 0, \quad \deg(T_{ij,k})\ =\  Q(e_{ij,k}), \quad \text{with} \quad Q\colon\ZZ^{n'+m}\rightarrow \ZZ^{n'+m}/\im(P^*),$$
where $P$ is one of the following:
\begin{enumerate}
\item
If $c:=\gcd(\mathfrak{l}_0, \mathfrak{l}_1) >1$ and 
$\gcd(\mathfrak{l}_i, \mathfrak{l}_j) = 1$ holds
whenever $j \notin \left\{0, 1\right\}$, then
$P$ is built up as in (\ref{equ:P}).
\item
If $\gcd(\mathfrak{l}_0, \mathfrak{l}_1) = \gcd(\mathfrak{l}_1, \mathfrak{l}_2) = \gcd(\mathfrak{l}_0, \mathfrak{l}_2)=2$ and 
$\gcd(\mathfrak{l}_i, \mathfrak{l}_j) = 1$ holds
whenever ${j \notin \left\{0, 1,2\right\}}$, then
$P$ is built up as in (\ref{equ:P2}).
\end{enumerate}
\end{corollary}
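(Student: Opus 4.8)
The plan is to read the grading off directly from the proof of Theorem~\ref{thm:divclgr}, since the matrices of~(\ref{equ:P}) and~(\ref{equ:P2}) were already produced there. First I would recall the facts collected before Proposition~\ref{prop:coxRAP0}: the divisor class group grading on $R(A',P_0')$ is determined by $\deg(T_{ij,t}) = [D_{ij,t}]$ and $\deg(S_k') = [E_k] = 0$, and these classes generate $\Cl(X)$. Hence the grading is the surjective homomorphism $\gamma\colon \ZZ^{n'+m} \to \Cl(X)$ sending $e_{ij,t} \mapsto [D_{ij,t}]$ and $e_k \mapsto 0$, and the corollary amounts to the identity $\ker(\gamma) = \im(P^*)$.

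Next I would invoke the heart of the proof of Theorem~\ref{thm:divclgr}. Write $P_T$ for the block built in~(\ref{equ:P}) resp.~(\ref{equ:P2}); its rows are exactly the coefficient vectors of the principal divisors recorded in the relations~(\ref{equ1}), (\ref{equ2}) and~(\ref{equ3}). By Construction~\ref{constr:ClRel} the induced surjection $\ZZ^{n'}/\im(P_T^*) \to \Cl(X)$ is, \emph{by construction}, the map coming from $e_{ij,t} \mapsto [D_{ij,t}]$, so it automatically respects the grading. The determinantal-divisor estimates of Lemma~\ref{lemma:matrix}, together with the rank formula of Proposition~\ref{prop:rkClGroup} and the torsion computation of Lemma~\ref{lem:comptors}, are precisely what show in that proof that source and target have equal rank and equal torsion order; a surjection of finitely generated abelian groups with these two coincidences is forced to be an isomorphism. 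Thus $\gamma|_{\ZZ^{n'}}$ already has kernel exactly $\im(P_T^*)$.

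It then remains to account for the variables $S_k'$. Since $\deg(S_k') = 0$, each basis vector $e_k$ lies in $\ker(\gamma)$, so I would take the matrix $P$ of the corollary to be $P_T$ enlarged by an identity block on the $S$-coordinates, encoding the $m$ relations $\deg(S_k') = 0$. Then $\im(P^*) = \im(P_T^*) \oplus \ZZ^m$, the identity block exactly cancelling the free contribution of the $S_k'$, so that $\ZZ^{n'+m}/\im(P^*) \cong \ZZ^{n'}/\im(P_T^*) \cong \Cl(X)$. Consequently $\gamma$ factors through an isomorphism $\ZZ^{n'+m}/\im(P^*) \xrightarrow{\ \sim\ } \Cl(X)$, and composing with the projection $Q$ yields exactly $\deg(T_{ij,k}) = Q(e_{ij,k})$ and $\deg(S_k') = Q(e_k) = 0$.

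The step carrying the real weight is the injectivity of $\ZZ^{n'}/\im(P_T^*) \to \Cl(X)$, and this is supplied verbatim by the proof of Theorem~\ref{thm:divclgr}, needing no new argument. The only fresh bookkeeping is to observe that the surjection appearing there is literally the degree map—so the identification is grading preserving—and that adjoining the identity block for the $S_k'$ does not disturb this, since it merely removes the free summand those variables would otherwise contribute.
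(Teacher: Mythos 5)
Your proposal is correct and takes essentially the paper's own route: the paper proves this corollary simply by revisiting the proof of Theorem~\ref{thm:divclgr}, where the surjection $\ZZ^{n'}/\im(P^*)\to\Cl(X)$ from Construction~\ref{constr:ClRel} — which is by construction the degree map $e_{ij,t}\mapsto[D_{ij,t}]$ — is shown to be an isomorphism by matching rank (Proposition~\ref{prop:rkClGroup}) and torsion order (Lemma~\ref{lem:comptors}) against the determinantal-divisor bounds of Lemma~\ref{lemma:matrix}. Your additional bookkeeping for the $S_k'$-variables (an identity block cancelling the free $\ZZ^m$ summand, justified by $\deg(S_k')=[E_k]=0$) merely makes explicit how the $m'\times n'$ matrices of (\ref{equ:P}) and (\ref{equ:P2}) are to be read as maps on $\ZZ^{n'+m}$, which the paper leaves implicit.
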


\begin{remark}\label{rem:clhyp}
As a direct consequence of Theorem \ref{thm:divclgr}, 
we  can compute the divisor class groups of 
all hyperplatonic trinomial varieties. We list
the basic platonic tuple (bpt) of $R(A,P_0)$ 
and the divisor class group of ${X:= \Spec \, R(A,P_0)}$
in a table:

\begin{center}
\renewcommand{\arraystretch}{1.8} 
\begin{tabular}{l|l|l}
Case
&
bpt of $R(A,P_0)$
& 
divisor class group
\\
\hline
(i)
&
$(4,3,2)$ 
& 
$\ZZ^{n_1+n_3+\dots+n_r-(r-1)} \times \ZZ/3\ZZ$
\\
\hline
(ii)
&
$(3,3,2)$
& 
$\ZZ^{2\cdot (n_2+\dots+n_r-(r-1))} \times \ZZ/2\ZZ\times \ZZ/2\ZZ$
\\
\hline
(iii)
&
$(x,y,1)$  
& 
$\ZZ^{(\gcd(x,y)-1)\cdot(n_2+\dots+n_r-(r-1))}$
\\
\hline
(iv)
&
$(x,2,2)$ and $2 \nmid x$ 
& 
$\ZZ^{n_0+n_3+\dots+n_r-(r-1)} \times \ZZ/x\ZZ$
\\
\hline
(v)
&
$(x,2,2)$ and $2 \mid x$ 
& 
$\ZZ^{n_0+n_1+n_2+3\cdot(n_3+\dots+n_r-(r-1))} \times \ZZ/x\ZZ$
\end{tabular}.
\end{center}
\end{remark}

With the explicit description of the grading of
the Cox ring of a rational trinomial variety 
given via the matrices $P$ as described in Corollary \ref{cor:grad},
we are able to prove our second main result.

\begin{proof}[Proof of Theorem \ref{thm:link2surf}]
Let $X:= \Spec \, R(A,P_0)$ be hyperplatonic. Then due to Proposition \ref{prop:coxRAP0} the number $m$ of free variables $S_k$ remains the same in the Cox ring $R(A', P_0')$ of $X$. 
Thus, using Corollary \ref{cor:grad}, for our further considerations
we may restrict to the case of hyperplatonic trinomial varieties $\Spec \, R(A,P_0)$ with $m = 0$.
In a first step we show
that for any hyperplatonic ring $R$ with 
basic platonic triple 
$(\mathfrak{l}_0, \mathfrak{l}_1, \mathfrak{l}_2)$,
there exists a good quotient
$\CC^{n} \supseteq \Spec \, R \rightarrow Y(\mathfrak{l}_0, \mathfrak{l}_1, \mathfrak{l}_2)$
with respect to a quasitorus $\TT$.
Setting
$$
\tilde P
\
:=
\
\left[
\begin{array}{ccc}
\frac{1}{\mathfrak{l}_0} l_0
& 
\dots
& 
0
\\
\vdots
& 
\ddots 
& 
\vdots
\\
0 
& 
\dots
&
\frac{1}{\mathfrak{l}_r} l_r
\end{array}
\right],
$$
the map $Q\colon \ZZ^{n} \rightarrow \ZZ^{n}/\im(\tilde P^*)$
defines a grading on $R$, which coarsens the grading given by $P_0$
as in Construction \ref{constr:RAP0}. 
Moreover the Veronese subalgebra $S$ with respect to the degree zero
is generated by the monomials
$T_0^{l_0/\mathfrak{l}_0},\dots, T_r^{l_r/\mathfrak{l}_r}$
and we conclude 
$\Spec \, S \cong Y(\mathfrak{l}_0, \mathfrak{l}_1, \mathfrak{l}_2)$.

Now denote by $R'$ resp. $S'$ the Cox rings of $\Spec \, R$ 
resp. $\Spec \, S$ as given in Proposition \ref{prop::isotropy}
with the grading given by matrices
$P(R)$ resp. $P(S)$ as in Corollary \ref{cor:grad}.
We claim that we obtain the following commutative diagram
$$ 
\xymatrix{
{R'}
&&
{R}
\ar[ll]
\\
{S'}
\ar[u]
&&
{S,}
\ar[u]
\ar[ll]
}
$$
where the upward arrow on the r.h.s. 
is the embedding of a Veronese subalgebra with respect to some free abelian grading group
and the other arrows denote the embeddings of the Veronese subalgebras as defined above.
This proves the assertion as considering the grading 
given by $P(S)$ on $S'$
one directly checks that the 
isomorphism $S' \rightarrow 
\CC[T_0,T_1,T_2]/\bangle{T_0^{\mathfrak{l}_0} +T_1^{\mathfrak{l}_1}+T_2^{\mathfrak{l}_2}}$
deleting the redundant relations is a graded isomorphism with respect to the Cox ring
grading on the latter ring.

To prove our result it is now only 
necessary to show 
that the composition of the 
embeddings $S \rightarrow S'\rightarrow R'$
given by the matrices
$\tilde P$ and $P(S)$
factorizes over the embedding
$R\rightarrow R'$ given by $P(R)$. 
Note that if $P(R)$ is as in (\ref{equ:P})
the grading giving rise
to the composed Veronese embedding
$S \rightarrow S'\rightarrow R'$
can be represented by a matrix
of the same shape and with the same
number of columns as $P(R)$
but replacing the matrices
$A(c(i), l_{i,1})$ by matrices of the form
$$
B(c(i), l_{i,1})
\ :=\ 
\left[
\begin{array}{ccc}
l_{i,1}
& 
\dots
& 
0
\\
\vdots
& 
\ddots 
& 
\vdots
\\
0 
& 
\dots
&
l_{i,1}
\\
l_{i,1}/\mathfrak{l}_{i,1}
&
\dots
&
l_{i,1}/\mathfrak{l}_{i,1} 
\end{array}
\right]
\ \in\  \Mat(k+n_i,k\cdot n_i, \ZZ).
$$
In case that $P(R)$ is as in (\ref{equ:P2}) 
one has to
additionally replace the 
rows $2$ to $n_0+1$ with one row
$(l_{0,1}/\mathfrak{l}_{0,1}, l_{0,1}/\mathfrak{l}_{0,1}, 0,\ldots,0)$.
In particular the row lattice of this matrix
is a sublattice of the row lattice of $P(R)$
and we only have to show that it is a saturated sublattice. 
By the structure of the occurring matrices
this means that the row lattice generated 
by the matrix
$B(c(i),l_{r,1})$
is a saturated sublattice of
the row lattice of the matrix
$A(c(i), l_{i,1})$.
Note that the row lattice of 
$A(c(i), l_{i,1})$ is generated by
the rows of
$$
\left[
\begin{array}{cccc}
l_{i,1}
& 
\dots
& 
0
&
0
\\
\vdots
& 
\ddots 
& 
\vdots
& 
\vdots
\\
0 
& 
\dots
&
l_{i,1}
&
0
\\
E_{n_i} 
&
\dots
&
E_{n_i}
&
E_{n_i}
\end{array}
\right]
\ \in\  \Mat(k+n_i,k\cdot n_i, \ZZ).
$$
In particular the last $n_i$ rows span a saturated sublattice of 
this row lattice. 
As the lattice generated by
$(l_i/\mathfrak{l}_i, \ldots, l_i/\mathfrak{l}_i)$
lies saturated in this sublattice, the assertion follows.
\end{proof}

Besides the trinomial varieties there is another 
class of varieties arising in the context of Cox rings of
varieties with torus action of complexity one:
Due to \cite{HaHe, HaWr} trinomial varieties  
are precisely the total coordinate spaces of
rational varieties $X$ with torus action of complexity one admitting only constant invariant functions. 
Whereas these varieties arise as \emph{Type $2$}
in the description of \cite{HaWr}, the total coordinate spaces of rational varieties $X$ with torus action of complexity one leaving this setting are called \emph{of Type $1$}. They
are given as the common zero locus of relations of the form
$$ 
T_1^{l_1} - T_2^{l_2} - \theta_1,
\quad
T_2^{l_2} - T_3^{l_3} - \theta_2,
\quad
\ldots, \quad
T_{r-1}^{l_{r-1}}  - T_r^{l_r} - \theta_{r-1},
$$
with pairwise different
$\theta_1 =  1, \theta_2, \ldots, \theta_{r-1} \in \CC^*.$
Suitably renumbering and rearranging the trinomials we can achieve $\mathfrak{l}_1 \geq \ldots \geq \mathfrak{l}_r$
and as in the case of Type $2$
we may assume that $n_il_{ij} >1$ holds for all
$0 \leq i \leq n$.
In this situation we call 
$X$ \emph{adjusted} and set
$$ 
c(1)\ :=\ \mathfrak{l}_2, \quad c(2)\ :=\  \mathfrak{l}_1, \quad c(i)\ :=\  \mathfrak{l}_1 \cdot \mathfrak{l}_2 
\quad
\text{for}
\quad
i \geq 3.
$$
Using Theorem \ref{thm:divclgr} we obtain the following explicit description of the divisor class groups of varieties of Type $1$:

\begin{corollary}\label{cor:ClType1}
Let $X$ be a variety of Type $1$, assume $X$ to be adjusted and set $\tilde{n}:= \sum_{i= 1}^r((c(i) -1)n_i - c(i) +1).$
\begin{enumerate}
    \item 
    The divisor class group of $X$ is trivial if and only if one of the following statements hold:
    \begin{enumerate}
        \item 
        We have $\mathfrak{l}_i = 1$ for $1 \leq i \leq r$.
        \item
        We have $X \cong \V(T_1^2 + T_2^2 + 1)$.
    \end{enumerate}
    \item
    The divisor class group of $X$ is free of rank $\tilde{n}$ if and only of one of the following statements hold:
    \begin{enumerate}
        \item 
        We have $\mathfrak{l}_1 >1$ and $\mathfrak{l}_i = 1$ for all $i \geq 2$.
        \item
        We have $\mathfrak{l}_1 = \mathfrak{l}_2 =2$ and $\mathfrak{l}_i =1$ for all $i \geq 3$.
    \end{enumerate}
    \item 
    If we are not in Case (i) or Case (ii), then the divisor class group of $X$ is not finitely generated.
\end{enumerate}
\end{corollary}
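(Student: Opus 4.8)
The plan is to reduce everything to the trinomial case of Theorem~\ref{thm:divclgr} by homogenizing the defining relations. Starting from an adjusted Type~1 variety $X$ with relations $T_i^{l_i} - T_{i+1}^{l_{i+1}} - \theta_i$ for $i = 1, \dots, r-1$, I would adjoin a single new variable $T_0$, put $\mathfrak{l}_0 := \lcm(\mathfrak{l}_1, \mathfrak{l}_2)$, and consider the trinomial variety
$$
X' \ := \ \V\bigl(T_i^{l_i} - T_{i+1}^{l_{i+1}} - \theta_i T_0^{\mathfrak{l}_0}; \ i = 1, \dots, r-1\bigr).
$$
Setting $m_0 := T_0^{\mathfrak{l}_0}$ and $m_i := T_i^{l_i}$, the common solutions of the homogenized relations form the rowspan of a $2 \times (r+1)$ matrix $A'$ whose columns are pairwise linearly independent by the general position of the $\theta_i$ built into the definition of $X$; thus $X'$ is a genuine trinomial variety in the sense of Construction~\ref{constr:RAP0}, with $T_0$ a new block of a single variable, so $n_0 = 1$. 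After reordering into adjusted form one checks that the numbers $c(i)$ of Proposition~\ref{prop:coxRAP0} attached to the blocks $i \geq 1$ of $X'$ reproduce the Type~1 numbers $c(1) = \mathfrak{l}_2$, $c(2) = \mathfrak{l}_1$ and $c(i) = \mathfrak{l}_1\mathfrak{l}_2$ for $i \geq 3$, while the new block contributes $c(0) = \gcd(\mathfrak{l}_1, \mathfrak{l}_2)$.

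Next I would compare the two class groups. Scaling all monomials $m_i$ simultaneously defines a $\CC^*$-action on $X'$ that is free on $\{T_0 \neq 0\}$ and admits $\{T_0 = 1\}$ as a section, giving $X' \setminus \V(X'; T_0) \cong X \times \CC^*$ and hence $\Cl(X' \setminus \V(X'; T_0)) \cong \Cl(X)$. Denoting by $D_{0,1}, \dots, D_{0,c(0)}$ the components of $\V(X'; T_0)$, the excision sequence for the divisor class group of the normal variety $X'$ yields
$$
\Cl(X) \ \cong \ \Cl(X') \big/ \bigl\langle [D_{0,1}], \dots, [D_{0,c(0)}] \bigr\rangle.
$$
Since $n_0 = 1$, the new block contributes zero to the rank formula of Proposition~\ref{prop:rkClGroup}, so $\Cl(X)$ and $\Cl(X')$ both have rank $\tilde{n}$ and the subgroup divided out is finite.

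It then suffices to enumerate the possibilities for $X'$. As rationality is a birational invariant and $X' \setminus \V(X'; T_0) \cong X \times \CC^*$, the variety $X$ is rational precisely when $X'$ is, and Remark~\ref{rem:ratcrit} applied to $X'$ singles out exactly three families: (A) all $\mathfrak{l}_i = 1$, where $X'$ is factorial; (B) $\mathfrak{l}_1 > 1$ with $\mathfrak{l}_i = 1$ for $i \geq 2$, where $X'$ meets Theorem~\ref{thm:divclgr}(ii) for the pair $\{0,1\}$; and (C) $\mathfrak{l}_1 = \mathfrak{l}_2 = 2$ with $\mathfrak{l}_i = 1$ for $i \geq 3$, where $X'$ meets Theorem~\ref{thm:divclgr}(iii). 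The decreasing ordering $\mathfrak{l}_1 \geq \dots \geq \mathfrak{l}_r$ is what forces all remaining exponents to be trivial in (B) and (C), so the torsion predicted by Theorem~\ref{thm:divclgr} is either absent (in A and B) or reduces to the single factor $\ZZ/(\mathfrak{l}_0\mathfrak{l}_1\mathfrak{l}_2/4)\ZZ = \ZZ/2\ZZ$ (in C). Outside (A)--(C) the variety $X$ is non-rational, hence $\Cl(X)$ is not finitely generated, which is assertion~(iii).

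Finally I would carry out the quotient. In families (A) and (B) we have $c(0) = \gcd(\mathfrak{l}_1, \mathfrak{l}_2) = 1$, so $\V(X'; T_0)$ is prime with class zero and $\Cl(X) \cong \Cl(X')$ is trivial, respectively free of rank $\tilde{n}$. In family (C) we have $c(0) = 2$, and the crucial point is that $[D_{0,1}]$ generates the order-two torsion of $\Cl(X')$: this is exactly Proposition~\ref{prop:zerodegree}(ii), since the common $\Cl(X')$-degree of the defining relations equals the class $\sum_j \frac{1}{2} l_{0j}[D_{0j,1}] = [D_{0,1}]$ of the new block and generates the $\ZZ/2\ZZ$. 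Dividing it out removes the torsion and leaves $\Cl(X) \cong \ZZ^{\tilde{n}}$. Triviality then amounts to $\tilde{n} = 0$, which by adjustedness never happens in (B), occurs in (A) exactly when all $\mathfrak{l}_i = 1$, and occurs in (C) only for $r = 2$ and $n_1 = n_2 = 1$, i.e.\ $X \cong \V(T_1^2 + T_2^2 + 1)$; this yields assertions~(i) and~(ii). I expect the main obstacle to be the rigorous justification of $X' \setminus \V(X'; T_0) \cong X \times \CC^*$ together with the claim that $[D_{0,1}]$ is the order-two generator, both of which require the homogenization and the induced $\Cl(X')$-grading of Corollary~\ref{cor:grad} to be made fully explicit.
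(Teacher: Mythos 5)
Your route is, in substance, the paper's own: homogenize the Type~1 relations by a single new variable $T_0$ into a trinomial variety, identify the complement of $\V(T_0)$ with $X \times \CC^*$, run the excision sequence $\ZZ^{c(0)} \to \Cl(X') \to \Cl(X\times\CC^*) \to 0$, evaluate $\Cl(X')$ via Theorem~\ref{thm:divclgr}, and in the case $\mathfrak{l}_1=\mathfrak{l}_2=2$ kill exactly the order-two class $[D_{0,1}]$, which generates the torsion by Lemma~\ref{lem:principal} (the paper cites that lemma directly; your detour through Proposition~\ref{prop:zerodegree}(ii) amounts to the same thing, since that proposition is proved by Lemma~\ref{lem:principal}). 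The one point where the paper does not redo work is the homogenization itself: it quotes \cite[Cor.~3.4]{HaWr2}, which supplies \emph{adapted} pairwise different coefficients $\tilde\theta_i$ -- note that your claim that the original $\theta_i$ already give pairwise linearly independent columns of $A'$ is too quick: linear independence requires all partial sums $\theta_i+\cdots+\theta_{j-1}$ to be nonzero, not merely that the $\theta_i$ are pairwise distinct -- and, crucially, uses the exponent $\ell := \lcm(\mathfrak{l}_1,\ldots,\mathfrak{l}_r)$ on the new variable.

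That last point is a genuine flaw in your proposal: you homogenize with $\mathfrak{l}_0 := \lcm(\mathfrak{l}_1,\mathfrak{l}_2)$. Your $\CC^*$-action with section $\{T_0=1\}$ forces $T_0$ to have weight $1$, hence all monomials to have weight $\mathfrak{l}_0$, i.e.\ integral weights $w$ with $\sum_j l_{ij}w_{ij} = \mathfrak{l}_0$ for every block; this is solvable precisely when $\mathfrak{l}_i \mid \mathfrak{l}_0$ for all $i$, which your choice violates whenever some $\mathfrak{l}_i$ with $i\ge 3$ does not divide $\lcm(\mathfrak{l}_1,\mathfrak{l}_2)$. Concretely, for $X=\V(T_1^2-T_2^2-1,\ T_2^2-T_3^3-\theta_2)$ your $X'=\V(T_1^2-T_2^2-T_0^2,\ T_2^2-T_3^3-\theta_2T_0^2)$ admits no such action, and the projection $X'\setminus\V(T_0)\to\CC^*$, $p\mapsto t_0$, is a fiber bundle trivialized only after the cover $u^3=c^2$, with monodromy $T_3\mapsto \zeta T_3$ for a primitive third root of unity $\zeta$ -- so the asserted product decomposition $X'\setminus\V(T_0)\cong X\times\CC^*$ is unjustified. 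In the rational families (A)--(C) one has $\mathfrak{l}_i=1$ for $i\ge 3$, so the two lcm's coincide and your proofs of assertions~(i) and~(ii) go through; but assertion~(iii) rests exactly on transferring rationality in the \emph{non-rational} cases, which is where your identification breaks. The repair is simply to take $\mathfrak{l}_0 := \lcm(\mathfrak{l}_1,\ldots,\mathfrak{l}_r)$ (equivalently, to invoke \cite[Cor.~3.4]{HaWr2}), after which your argument coincides with the paper's.
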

\begin{proof}
Let $X$ be a variety of Type $1$ and set $\ell:= \mathrm{lcm}(\mathfrak{l}_1, \ldots, \mathfrak{l}_r)$.
Then due to \cite[Cor. 3.4]{HaWr2} there exist pairwise different $\tilde{\theta}_i \in \CC^*$ such that $X \times \CC^*
\cong \tilde{X}\setminus \V(T_{01})$ holds, where $\tilde{X}$ is the trinomial variety defined by the following relations:
$$ 
T_{01}^\ell + T_1^{l_1} +T_2^{l_2},
\quad
\tilde\theta_1 T_1^{l_1} + T_2^{l_2} +T_3^{l_3},
\
\ldots, \
\tilde\theta_{r-2} T_{r-2}^{l_{r-2}} + T_{r-1}^{l_{r-1}} +T_r^{l_r}.
$$
Denoting by 
$D_{01,1}, \ldots, D_{01, c(0)}$ the prime divisors inside
$V(\tilde{X}; T_{01})$,
the divisor class group $\Cl(X \times \CC^*) \cong \Cl(X)$ 
thus fits into the following exact sequence
\begin{center}
\begin{tikzcd}
\ZZ^{c(0)} \ar[rr,"e_i\mapsto \lbrack D_{01,i}\rbrack"]&&\Cl(\tilde{X})\ar[r]&\Cl(X \times \CC^*)\ar[r]&0.
\end{tikzcd}
\end{center}
In particular $X$ is rational if and only if $\tilde{X}$ is so.
This gives rise to the following three cases:

\vspace{1mm}
\noindent
\emph{Case 1:} We have $\mathfrak{l}_i =1$ for $1 \leq i \leq r$.
In this case $\tilde{X}$ has trivial divisor class group
due to Theorem \ref{thm:divclgr} and thus $X$ has so. 

\vspace{1mm}
\noindent
\emph{Case 2:} We have $\mathfrak{l}_1  > 1$ and $\mathfrak{l}_i  = 1$ for $i \geq 2$.
In this case $\tilde{X}$ is as in Theorem \ref{thm:divclgr} (ii)
and with $\mathfrak{l}_i =1$ for $i \geq 2$ and $n_0 =1$ we conclude
$\Cl(\tilde{X}) \cong \ZZ^{\tilde{n}}$ with $\tilde{n}$ as above. 
Moreover, considering the ring $R(A,P_0)$ corresponding to $\tilde{X}$ we obtain that $T_{01}$ is a $K_0$-prime variable in a $K_0$-factorial ring. Thus due to \cite[Prop. 1.5.3.3]{ArDeHaLa} the vanishing set
$V(\tilde{X}, T_{01})$ is a principal prime divisor.
We conclude $\Cl(\tilde{X}) \cong \Cl(X) \cong \ZZ^{\tilde{n}}$.
We show that no variety with trivial divisor class group arises in this case:
Assume $\tilde{n} =0$. Then $\mathfrak{l}_1 >1$ implies
$n_i =1$  for $i \geq 2$. But as $\mathfrak{l}_i =1$ 
holds for $i \geq 2$ 
this implies $n_i l_{i1} = 1$; in contradiction to $X$ being adjusted. 

\vspace{1mm}
\noindent
\emph{Case 3:} We have $\mathfrak{l}_1= \mathfrak{l}_2  = 2$ and $\mathfrak{l}_i  = 1$ for $i \geq 3$.
In this case $\tilde{X}$ is as in Theorem \ref{thm:divclgr} (iii)
and with $\mathfrak{l}_1 = \mathfrak{l}_2 = 2$ and $\mathfrak{l}_i =1$ for $i \geq 3$ we conclude 
$l_{01} = \ell = 2$ and $\Cl(\tilde{X}) \cong \ZZ/2\ZZ \times \ZZ^{\tilde{n}}$. 
Consider the irreducible components $D_{01,1}$ and $D_{01,2}$ of $\V(\tilde{X}; T_{01})$. Then 
as $n_0 =1$ by construction of $\tilde{X}$,
the class of $D_{01,1}$ generates a subgroup of order 
$2$ in $\Cl(\tilde{X})$ due to Lemma \ref{lem:principal}.
In particular using the above exact sequence we conclude
$\Cl(X) \cong \ZZ^{\tilde{n}}$ as claimed. 
We show that the only variety with trivial divisor class group arising in this case is $\V(T_1^2 + T_2^2 + 1)$.
Assume $\tilde{n} =0$. Then $\mathfrak{l}_1 =2$ implies
$n_i =1$  for $i \geq 2$. As $\mathfrak{l}_i =1$ 
holds for $i \geq 3$ and $X$ is adjusted we obtain $r = 2$. This implies $X \cong \V(T_1^2 + T_2^2 +1)$ as claimed. 
\end{proof}

\bibliographystyle{abbrv}
\bibliography{bib}
\end{document}